\theoremstyle{plain}
\newtheorem{corollary}{Corollary}[section]
\newtheorem{theorem}[corollary]{Theorem}
\newtheorem{lemma}[corollary]{Lemma}
\newtheorem{proposition}[corollary]{Proposition}
\newtheorem*{theorem*}{Theorem}
\newtheorem*{lemma*}{Lemma}
\theoremstyle{definition}
\newtheorem{algorithm}{Algorithm}
\newtheorem{definition}[corollary]{Definition}
\newtheorem*{definition*}{Definition}
\newtheorem{Hypo}{Assumption}
\theoremstyle{remark}
\newtheorem*{remark}{Remark}
\newcommand{\R}{\mathbb{R}}
\newcommand{\proba}{\mathbb{P}}
\newcommand{\N}{\mathbb{N}}
\newcommand{\E}{\mathbb{E}}
\newcommand{\Var}{\mathbb{V}}
\newcommand{\T}{\mathcal{T}}
\newcommand{\TSB}{\Upsilon}
\newcommand{\M}{\mathfrak{p}}
\newcommand{\D}{\mathcal{D}}
\newcommand{\V}{\mathcal{V}}
\newcommand{\Dn}{{\D^n}}
\newcommand{\Pn}{{\P^n}}
\newcommand{\K}{\mathbb{K}}
\newcommand{\X}{\mathcal{X}}
\newcommand{\1}{\mathbf{1}}
\renewcommand{\P}{\mathcal{P}}
\newcommand{\n}{\aleph} 
\renewcommand{\d}{{\mathbf d}}
\newcommand{\p}{{\mathfrak p}}
\newcommand{\e}{\varepsilon}
\newcommand{\s}{\sigma}
\newcommand{\ply}{\Rightarrow}
\newcommand{\OmegaD}{\Omega_{\D}}
\newcommand{\OmegaP}{\Omega_{\P}}
\newcommand{\OmegaT}{\Omega_{\Theta}}
\DeclareMathOperator*{\limit}{\longrightarrow}
\DeclareMathOperator*{\SBB}{SB}
\newcommand*{\GH}{\text{GH}}
\DeclareMathOperator*{\supp}{supp}
\newcommand*{\GP}{\text{GP}}
\newcommand*{\WGP}{\text{WGP}}
\newcommand*{\WGH}{\text{WGH}}
\newcommand*{\GHP}{\text{GHP}}
\newcommand*{\WGHP}{\text{WGHP}}
\DeclareMathOperator*{\argmin}{argmin}
\begin{document}
\title{Limit of trees with fixed degree sequence} 
\author{Arthur Blanc-Renaudie}
\date{\today}
\maketitle
 \begin{abstract}  We show, under natural conditions, that uniform rooted trees with fixed degree sequence converge after renormalization toward
inhomogeneous continuum random trees (ICRT). We also provide a sharp upper-bound for the tail of their heights. We also extend our results to $\P$-trees, ICRT, and trees with random degree sequence. 
In passing we confirm a conjecture of Aldous, Miermont, and Pitman \cite{ExcICRT} stating that L\'evy trees are ICRT with random parameters. 
\end{abstract}
\section{Introduction} 
\subsection{Main results}
Trees with fixed degree sequences are universal models related to many others: Galton--Watson trees, $\P$-trees, random bipartite plane maps with prescribed faces \cite{FixedMap,FixedMap2,FixedMap3}, the critical configuration model  \cite{surplus}\dots The main goal of this paper is to study the geometry of those trees when equipped with their shortest-path distance. In a second part, we will discuss some analogous results and consequences for other models of trees: $\P$-trees, ICRT, and trees with random degree sequences. 

Let  $\{V_i\}_{i\in \N}$ be a set of vertices. Given an integer sequence $\D=(d_i)_{1\leq i \leq n}$ with $\sum d_i=n-1$, we consider a uniform rooted tree $T^\D$ among all trees such that for every $i$, $V_i$ has $d_i$ children. We call this random tree a uniform tree with fixed degree sequence $\D$, or for short a $\D$-tree. For convenience, we always assume that $\D$ is non-increasing.

Let $\OmegaD$ be the set of possible degree sequences. For a degree sequence $\D=(d_1,\dots,d_n)\in \OmegaD$, we look at the number of vertices, leaves, vertices with 1 child, and vertices with at least 2 children:
\[ \n^\D:=n \quad ; \quad  \n^\D_0:=\#\{i, d_i=0\} \quad ; \quad \n^\D_1:=\#\{i, d_i=1\} \quad ; \quad \n^\D_{\geq 2}:=\#\{i, d_i\geq 2\}. \]
The variance of the degree plays a major role in the geometry of $\D$-trees (recall that $\sum d_i=n-1$). Let 
\[ \sigma^\D:= \sum_{i=1}^n d_i(d_i-1).\]

We prove that, in great generality, $\D$-trees converge toward ICRT (inhomogeneous continuum random trees) introduced by Aldous, Camarri, Pitman \cite{IntroICRT1,IntroICRT2}. ICRT are $\R$-trees (loopless geodesic metric spaces) parametrized by a sequence $\Theta$ with usually
\begin{equation} \text{(a)}\quad  \sum_{i=0}^\infty \theta_i^2=1 \quad ; \quad \text{(b)} \quad \theta_1\geq \theta_2\geq \dots \quad ; \quad \text{(c)} \quad \theta_0=0 \text{ or } \sum_{i=1}^\infty \theta_i=\infty.  \label{eq:ThetaSet} \end{equation}
We write $(\T^\Theta,\d^\Theta)$ for the ICRT of parameter $\Theta$, and let $\p^\Theta$ denote the natural probability measure on ICRT from \cite{ICRT1}. We refer to Section \ref{sec:defICRT} for detailed definitions. 

Let $(\Dn)_{n\in \N}=(d^n_1,d^n_2,\dots d^n_n)_{n\in \N}\in \OmegaD^\N$ be a sequence of degree sequence. (The fact $\Dn$ corresponds to exactly $n$ vertices is unnecessary but yields better notations.)  To simplify the notations we will usually use the superscripts $n$ instead of $\Dn$. We assume: 
\begin{Hypo}[$\Dn \ply \Theta$]\label{Hypo2}
  \label{Hypo2} $d^n_1/\n^n\to 0$.  $\n_{0}^n\to \infty$. And for all $i\geq 1$, $d_i^n/\sigma^n\to \theta_i$.
\end{Hypo} 
We also let $\d^\D$ denote the shortest-path distance on $T^\D$. And we consider for every $n\in \N$, a measure $\p^n$ on $\{1,\dots,n\}$, such that $\p^n\to 0$ uniformly. (e.g. uniform on the leaves/vertices$\dots$) The next result describes the distances between random vertices:
\begin{theorem} \label{D_GP_T} If $\Dn\ply \Theta$, $\p^n\to 0$ uniformly, and \eqref{eq:ThetaSet} holds, then the following convergence holds for the weak Gromov--Prokhorov (GP) topology (see Appendix \ref{GPdef} for definition of the topology):
\[ \left ( T^n,(\s^n/n) d^n,\M^n \right) \limit^{\WGP} (\T^\Theta,d^\Theta,\p^\Theta).  \]
\end{theorem}

Theorem \ref{D_GP_T} gives a global picture of all natural GP convergences for $\D$-trees. To see why, let us discuss our assumptions. First, it is natural to expect that the number of leaves $\n_{0}^n$ diverges. Then, one may want to remove the assumption $d_1^n/n\to 0$. In other words, some vertices may have a macroscopic degree.  In that case,  we will see in Section \ref{sec:Ptree} that the typical distances between vertices stay finite and prove a kind of limit toward $\P$-trees, a discrete model introduced by Aldous, Camarri, and Pitman \cite{IntroICRT1,IntroICRT2}. Moreover, up to subsequence extraction, we may always find a sequence $\Theta$ such that $d_i^n/\sigma^n\to \theta_i$. Also when $\Dn\ply \Theta$, (b) from \eqref{eq:ThetaSet} holds, and we may chose $\theta_0$ to get (a). Finally, when one removes (c) from \eqref{eq:ThetaSet}, $\p^\Theta$ cannot properly be defined so Theorem \ref{D_GP_T} cannot hold, but we will prove in Section \ref{Sec:CVfirstbranches} that $\Dn$-trees still converge toward $\Theta$-ICRT for a weaker topology. See Section \ref{subsec:RandomDegree} for further technical details on this case distinction, including random degree sequences.

While the GP topology describes distances between random vertices, several important quantities depend on all vertices, for instance the height, the diameter\dots To this end, one usually uses the Gromov--Hausdorff--Prokhorov  (GHP) topology (see Appendix \ref{GHPdef}). To obtain the GHP convergence of $\D$-trees, we need a tightness assumption. For every $l\in \R^+$, $\D \in \OmegaD$ let 
\[ \psi^\D(l):=l\sum_{i=1}^{\n^\D} \frac{d^\D_i-1}{\sigma^\D}(1-e^{-d^\D_il/\sigma^\D}). \]
\begin{Hypo} \label{Hypo3} \label{D_Tight_GHP}
\[ \lim_{y\to +\infty} \limsup_{n\to +\infty} \int_{y}^{\sigma^n} \frac{dl}{\psi^{n}(l)} =0. \]
\end{Hypo} 
\begin{theorem} \label{D_GHP_T} Under the same setting as Theorem \ref{D_GP_T}, if furthermore Assumption \ref{D_Tight_GHP} holds, then the following convergence holds for the weak Gromov--Hausdorff--Prokhorov (GHP) topology
\[ \left ( T^n,(\s^n/n) d^n,\M^n \right) \limit^{\WGHP} (\T^\Theta,d^\Theta,\p^\Theta).  \]
\end{theorem}
Assumption \ref{D_Tight_GHP} is likely near optimal as it coincides with the necessary and sufficient condition $\int^\infty \frac{dl}{\psi(l)}<\infty$ for the compactness of L\'evy trees (see Duquesne, Le Gall \cite{Duquesne1,Duquesne2}) and of ICRT \cite{ICRT1}. In some sense, $\psi^\D$ can thus be seen as an analog of the Laplace exponent of L\'evy trees for $\D$-trees. 

The last main result is a near optimal upper-bound for the tail of the height of $\D$-trees. We conjecture the bound to be optimal up to multiplicative constants and additive logarithmic terms. 
\begin{theorem} \label{thm:D_Height} There exists $c,C>0$ such that for every $\D\in \OmegaD$ and $x\in \R^+$ we have
\[ \proba\left (c\frac{\s^{\D}}{\n^\D}H(T^{\D})>x+\int_{1}^{\sigma^\D}\frac{dl}{\psi^\D(l)}\right)\leq Ce^{-c\psi^\D(x)} . \]
\end{theorem}
The bound matches with the results of Addario--Berry, Devroye, Janson \cite{TailsG} and of Kortchemski \cite{TailsS} for Galton--Watson trees and L\'evy trees in respectively the brownian and the stable case. Let us also mention that independently, Addario--Berry and Donderwinkel \cite{Stupid} also proved a bound on the height for $\D$-trees which is good when $\sigma^\D=O(\sqrt{\n^\D})$, that is when all degrees are small, but fails to get the right multiplicative order for the height in many cases. 

\subsection{Background}
Since the celebrated works of Aldous \cite{Aldous1,Aldous2,Aldous3}, scaling limits of random trees/graphs are central to many studies. Aldous notably proved the convergence of Galton--Watson trees with finite variance toward the Brownian tree, a universal limit for numerous models of trees with height of order $\sqrt{n}$. 

For smaller trees, one usually finds other limits. Among these there are two important models: On the one hand, Le Gall and Le Jan \cite{IntroLevy1,IntroLevy2}, and Le Gall and Duquesne \cite{Duquesne1,Duquesne2} extensively studied L\'evy trees, which notably appears as the limits of Galton--Watson trees with infinite variance. On the other hand, Aldous, Camarri, and Pitman \cite{IntroICRT1,IntroICRT2} introduced ICRT as the limits of $\P$-trees. Also, Aldous, Miermont, and Pitman \cite{ExcICRT} conjectured that L\'evy trees are equal in distribution to ICRT with a random $\Theta$. In Section \ref{sec:randomD}, we show that this conjecture hold whenever the L\'evy trees are GP limits of Galton--Watson trees. This sugest that ICRT are a universal limit for many trees.


To tackle this problem, it is natural to consider one of the most universal model of trees: $\D$-trees. This was first noted by Broutin and Marckert  \cite{Broutin}, who studied $\D$-trees in the case $\sigma^n=O(\sqrt{n})$, where the limit is always the Brownian tree. Marzouk \cite{FixedMap,FixedMap2,FixedMap3} adapted their method to study bipartite planar maps with fixed degree sequence, which are related to $\D$-trees thanks to the Bouttier--Di~Francesco--Guitter bijection \cite{BDG} and the Janson--Stef\'anson bijection \cite{JS}. 
However, this approach  fails to capture the geometry of $\D$-trees in the general case. Instead, we use the first approach of Aldous \cite{Aldous1} based on stick-breaking constructions, but with other algorithms.

Lastly, let us mention that our results help to study the connected components of critical multiplicative graphs and  configuration models. (See the follow-up paper \cite{surplus}.)

\paragraph{Plan of the paper:}
In Section \ref{Sec:CVfirstbranches}, we present our stick-breaking constructions for $\D$-trees and ICRT and prove the convergence of the first branches of $\D$-trees. We then deduce Theorem \ref{D_GP_T} in Section \ref{GPsection}. Next, in Section \ref{GHPsection} we show Theorems \ref{D_GHP_T} and \ref{thm:D_Height}. In Section \ref{sec:Ptree} we deal with the special case $d^n_1/n\nrightarrow 0$ where $\D$-trees converges toward $\P$-trees. Then, in Section \ref{sec:randomD} we extend our main results for $\P$-trees, ICRT, and for random degree sequence.  Section \ref{sec:Ptree} and \ref{sec:randomD} can be read after Section \ref{1.2}. Appendix \ref{sec:Topology} recall the definitions of the GP, GH, GHP topologies. And Appendix \ref{sec:stable}  present simple applications of our main results to some stable cases.

\paragraph{Notations:} In this paper similar variables for $\D$-trees, $\P$-trees, $\Theta$-ICRT share similar notations. To avoid ambiguity, the models that we are using and their parameters are indicated by superscripts $\D,\P,\Theta$, $\Dn$, $\P^n$, $\Theta^n$ or simply $n$. We often drop those superscripts when the context is clear.

\section{Convergence of the first branches} \label{Sec:CVfirstbranches}
This section is organized as follows. We first define ICRT, then we present the Foata--Fuchs \cite{FoataFuchs} stick-breaking construction of $\D$-trees. We then prove that this construction converges in some sense (Proposition \ref{pro:First_branches_D}) toward the construction for ICRT.
\subsection{Definition of ICRT} \label{2.4} \label{sec:defICRT}
We first define a generic stick breaking construction.  It takes for input two sequences in $\R^+$ called cuts ${\textbf y}=(y_i)_{i\in \N}$ and glue points ${\textbf z}=(z_i)_{i\in \N}$, which satisfy
\begin{equation*} \forall i<j,\ \ y_i<y_j \qquad ; \qquad y_i\limit \infty \qquad ; \qquad \forall i\in \N,\ \ z_i\leq y_i, \label{2609} \end{equation*}
and creates an $\R$-tree by recursively "gluing" the segment $(y_i,y_{i+1}]$ at position $z_i$,  or rigorously, by constructing  a consistent sequence of distances $(d_n)_{n\in \N}$ on $([0,y_n])_{n\in \N}$. \begin{algorithm} \label{Alg1} \emph{Generic stick-breaking construction of $\R$-tree.}
\begin{compactitem}
\item[--] Let $d_0$ be the trivial metric  on $[0,0]$.
\item[--] For each $i\geq 0$ define the metric $d_{i+1}$ on $[0, y_{i+1}]$ such that for each $x\leq y$: 
\[ d_{i+1}(x,y):=
\begin{cases} 
d_{i}(x,y) & \text{if } x,y\in [0, y_i] \\
d_{i}(x,z_i)+|y-y_i| & \text{if } x \in [0, y_i], \, y \in (y_i, y_{i+1}] \\
|x-y| &  \text{if } x,y\in (y_i, y_{i+1}],
\end{cases} \]
where by convention $y_0:=0$ and $z_0:=0$.
\item[--] Let $d$ be the unique metric on $\R^+$ which agrees with $d_i$ on $[0, y_i]$ for each $i\in \N$.
\item[--] Let $\SBB({\textbf y},{\textbf z})$ be the completion of $(\R^+,d)$.
\end{compactitem}
\end{algorithm}

Now, let $\OmegaT$ be the space of sequences $(\theta_i)_{i\in \{0\}\cup \N}$ in $\R^+$ such that $\sum_{i=0}^\infty \theta_i^2=1$ and $\theta_1\geq \theta_2\geq \dots$ For every $\Theta\in \OmegaT$, the $\Theta$-ICRT is the random $\R$-tree constructed as follows:
\begin{algorithm} \label{ICRT} \emph{Construction of $\Theta$-ICRT (from \cite{ICRT1})}
\begin{compactitem}
\item[-] Let $(X_i)_{i\in \N}$ be independent exponential random variables of parameter $(\theta_i)_{i\in \N}$.
\item[-] Let $\mu$ be the measure on $\R^+$ defined by $\mu=\theta_0^2 dx+\sum_{i=1}^{\infty} \delta_{X_i} \theta_i$. 
\item[-] Let $(Y_i,Z_i)_{i\in \N}$ be a Poisson point process  on $\{(y,z): y\geq z \geq 0\}$ of intensity $dy\times d\mu$.
\item[-] The $\Theta$-ICRT is defined as $(\T^\Theta,\d^\Theta)=\SBB((Y_i)_{i\in \N},(Z_i)_{i\in \N})$. (see Algorithm \ref{Alg1})
\end{compactitem}
\end{algorithm}
Then we recall the probability measure on ICRT introduced in \cite{ICRT1}. To simplify our expressions, note that $\mu^\Theta[0,\infty]=\infty$ holds if and only if either $\theta^\Theta_0>0$ or $\sum_{i=1}^\infty \theta^\Theta_i=\infty$, (which is \eqref{eq:ThetaSet} (c)).

\begin{definition}[Proposition 3.2 from \cite{ICRT1}]  Let $\Theta\in \OmegaT$ with $\mu^\Theta[0,\infty]=\infty$.  
Almost surely, $\sum_{i=1}^n \delta_{Y_i}/n$ converges weakly on $\T^\Theta$  as $n\to \infty$ toward a probability measure $\p^\Theta$.
\end{definition}
Finally let us recall the criterium for the compactness of ICRT in terms of $\psi^\Theta:l\mapsto l\E[\mu^\Theta[0,l]]$:
\begin{theorem}[Theorem 3.3 from \cite{ICRT1}] The ICRT is almost surely compact if and only if 
\[ \int_1^\infty \frac{dl}{\psi^\Theta(l)}<\infty. \] 
\end{theorem}
\subsection{Stick-breaking construction for $\D$-trees} \label{1.2}
In this section we present the Foata--Fuchs \cite{FoataFuchs} stick-breaking construction of $\D$-trees (see also \cite{FoataFuchs2} for a recent expository article), which is at the center of this paper. We use the next conventions: For every graph $G=(V,E)$ and edge $e=\{v_1,v_2\}$, let $G\cup e$ denote the graph $(V\cup \{v_1,v_2\},E\cup \{e\})$. We say that a vertex $v\in G$ if $v\in V$. Also, for every degree sequence $\D$ let $L^\D_1$, $L^\D_2$, \dots denote the leaves (that is the vertices $V_{a_1},V_{a_2}\dots$ with $a_1\leq a_2\leq \dots$ and $d_{a_1}=d_{a_2}=\dots =0$).

\begin{algorithm} \label{D-tree} \emph{Foata--Fuchs \cite{FoataFuchs} stick-breaking construction of a $\D$-tree (see Figure \ref{explore1}).}
\begin{compactitem}
\item[-] Let $A^\D=(A^\D_i)_{1\leq i \leq \n-1}$ be a uniform $\D$-tuple (that is a tuple such that $\forall i\in \N$, $V_i$ appears $d_i$ times).
\item[-] 
Let $T^\D_1:=(\{A_1\},\emptyset)$ then for every $2\leq i \leq \n$ let 
\[ T^\D_i:=\begin{cases} T_{i-1}\cup \{A_{i-1},A_{i}\} & \text{if } A_{i}\notin T_{i-1}.\\T_{i-1}\cup \{A_{i-1},L_{\inf\{k, L_k\notin T_{i-1}\}}\}& \text{if } A_{i} \in T_{i-1} \text{ or } i=\n.
\end{cases} \]
\item[-] Let $T^\D$ denotes the rooted tree $(T_\n,A_1)$.
\end{compactitem}
\end{algorithm}
\begin{figure}[!h] \label{explore1}
\centering
\includegraphics[scale=0.65]{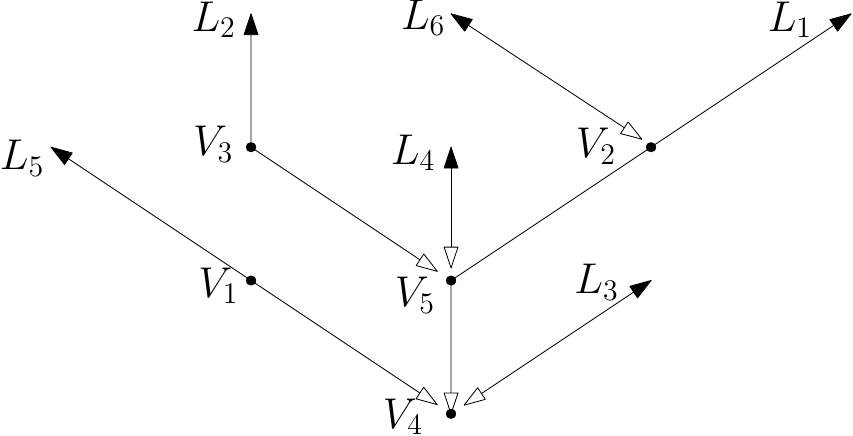}
\caption{Stick breaking construction of a $\D$-tree with $\D=(1,2,1,3,3,0,0,\dots)$ and $(A^\D_i)_{1\leq i \leq \n-1}=(V_4,V_5,V_2 ,V_5,V_3,V_4,V_5,V_4,V_1,V_2)$. The exploration starts at $V_4$ then follows the white-black arrow toward $L_1$, then jumps at $V_5$ to follow the path toward $L_2$ and so on\dots 
Here $Y_1=4, Y_2=6, Y_3=7\dots$, $Z_1=2, Z_2=1, Z_3=2\dots$, $X_1=9, X_2=10, X_3=5\dots$ }
\label{explore1}
\end{figure}%

To compare this algorithm with our construction for ICRT, let us introduce a few notations. We let $Y^\D_1, Y^\D_2\dots $ be the indexes such that $A_i\in \{A_1,\dots, A_{i-1}\}$. Note that there exists exactly $n_0^\D-1$ such indexes as each repetition corresponds to a leaf, with one extra leaf at the end.
Also, for every $1\leq i <\n_0$, let $Z^\D_i$ denote the smallest integer $z$ such that $A_z=A_{Y^\D_i}$. And, for every $i\in \N$, let $X^\D_i$ be the first integer $x$ such that $A_x=V_i$. For ease of notations, for the $i$ where $X_i$, or $Y_i$, or $Z_i$ are not properly defined with the above definitions, we let $X_i=\infty, Y_i=\infty, Z_i=\infty$. We also consider the measure $\mu^\D:= \sum_{i=1}^\infty (d^\D_i-1)\delta_{X^\D_i}$.

The following technical result is central for our proof of the Gromov--Prokhorov convergence, as the positions of the cuts and glue points completely describes the distances between the leaves:
\begin{proposition} \label{pro:First_branches_D}Assume that $\Dn \ply \Theta$ then we have the following weak joint convergence:
\[ (X^n_i\sigma^n/n,Y^n_i\sigma^n/n,Z^n_i\sigma^n/n)_{i\in \N}\to (X^\Theta_i,Y^\Theta_i,Z^\Theta_i)_{i\in \N}\]
\end{proposition}
Finally, to simplify the notations, let $\TSB^\D:=(X^\D_i,Y^\D_i,Z^\D_i)_{i\in \N}$ and $\TSB^\Theta:=(X^\Theta_i,Y^\Theta_i,Z^\Theta_i)_{i\in \N}$. 
\begin{remark} Proposition \ref{pro:First_branches_D} does not require \eqref{eq:ThetaSet} (c), which is neccesary for GP convergence, since it is necessary to define properly $\p^\Theta$.
\end{remark}
 \subsection{Preliminary: continuum modification of $\TSB^\D$} \label{5.2} 
We define here a modification for $\TSB^\D$, which may seem more complex, but is actually easier to study line by line.   (Steps 4 to 7 below will disappear at the limit giving part of the ICRT construction.)
 \begin{algorithm} \label{Alg4} Definition of  $\tilde \TSB^\D$.
\begin{compactitem}
\item[--] Let $(\tilde X_i)_{i\in \N}$ be independent exponential random variables of parameter $(d_i/\sigma)_{i\in \N}$.
\item[--] Let $\tilde \mu$ be the measure on $\R^+$ defined by $\tilde \mu=\sum_{i=1}^{\n} \delta_{ \tilde X_i} \left (d_i-1 \right )/\sigma$. 
\item[--] Let $(\hat Y_i,\hat Z_i)_{i\in \N}$ be a Poisson point process  on $\{(y,z): y\geq z\geq 0\}$ of intensity $dy\times d\tilde \mu$.
\item[--] For every $i\in \N$ with $d_i\geq 1$, let $U_{X,i}$ be uniform in $\{1,\dots, d_i\}$.
\item[--] For every $j\in \N$, let $U_{j}$ be uniform in $\{1,\dots, d_i\}\backslash\{U_{X,i}\}$ where $i$ is the unique index such that $\tilde X_i=\hat Z_j$. ($U_j$ is well defined since $\hat Z_j$ is in the support of $\tilde \mu$, which is $\{\tilde X_i\}_{i:d_i>0}$.)
\item[--] Let  $k_1<  \dots < k_{\n_0-1}$ be the indexes such that $(Z_k, U_k)\notin \{ (Z_j, U_j),j<k\}$. (There is a.s. $\n_0-1$ such indexes since $\n_0-1=\sum_{i:d_i>0} (d_i-1)$.)
\item[--] For every $1\leq i <\n_0$, let  $\tilde Y_i=\bar Y_{k_i}$ and let $\tilde Z_i=\bar Z_{k_i}$. For every $i\geq \n_0$, let $\tilde Y_i=\tilde Z_i=+\infty$.
\item[--] Let $\tilde \TSB^\D=(\tilde X_i, \tilde Y_i, \tilde Z_i)_{i\in \N}$.
\end{compactitem}
\end{algorithm}
Please already note that:
\begin{lemma} \label{psimu}For every $l\geq 0$, we have $\psi^\D(l)=l\E[\tilde \mu^\D[0,l]]$.
\end{lemma}
\begin{proof} It directly follows by linearity of the expectation from the definition of $\psi, \tilde \mu$. 
\end{proof}

Next, for every $f:\R^+\mapsto \R^+$, and $\TSB=(x_i,y_i,z_i)_{i\in \N}$, let $f(\TSB):= ( f(X_i),f(Y_i),f(Z_i))_{i\in \N}.$
Also, for every $\D \in \OmegaD$, let $(E^\D_i)_{1\leq i \leq \n^\D-1}$ be a family of independent exponential random variables of mean $(\sigma^\D/(\n^\D-i))_{1\leq i \leq \n^\D-1}$ and let $f^\D:\R^+\mapsto \R^+$ be a continuous increasing function such that for every $1\leq i \leq \n^\D-1$, we have $f^\D(i)=\sum_{k=1}^i E^\D_k$.

 We can now state the main result of this subsection:
\begin{proposition} \label{timechange} $f^\D(\TSB^\D)$ and $\tilde \TSB^\D$ have the same distribution.
\end{proposition}

\begin{proof} We explicitly construct a coupling between Algorithms \ref{D-tree} and \ref{Alg4} such that a.s. 
\[ (f(X_i), f(Y_i), f(Z_i))_{i\in \N}=(\tilde X_i, \tilde Y_i, \tilde Z_i)_{i\in \N}. \]
 To this end we use the same "starting randomness" for both algorithms. First let 
$I:=\{(i,j)\in \N^2, 1\leq i \leq \n, 1\leq j \leq d_i\}.$ For every $(i,j)\in I$, let $P_{i,j}$ be a Poisson point process on $\R^+$ with rate $1/\s$ and let $E_{i,j}= \min P_{i,j}$. For every $i\in \N$ such that $d_i\geq 1$ let $k_i=\argmin \{ E_{i,j}, 1\leq j \leq d_i\}$. Finally let $I_2=I\backslash\{i,k_i\}_{1\leq i \leq \n}$.

Toward Algorithm \ref{D-tree}, sort $\{(E_{i,j},V_i,j)\}_{(i,j)\in I}$ by the first coordinate as $(t_i,B_i,K_i)_{1\leq i \leq \n-1}$. One can easily check, that $(t_i)_{1\leq i \leq \n-1}$ is independent of $(B_i, k_i)_{1\leq i \leq \n-1}$, that $(t_i)_{i\in \N}$ have the same distribution as $(f(i))_{1\leq i \leq \n-1}$, and that $(B_i,k_i)_{1\leq i \leq \n-1}$ is a uniform permutation of $I$. We omit the details. As a result, $(t_i,B_i)_{1\leq i \leq \n-1}$ have the same distribution as $(f(i), A_i)_{1\leq i \leq \n-1}$. Therefore we may assume that for every $1\leq i \leq \n-1$, $(t_i,B_i)=(f(i),A_i)$. It directly follows that for every $i\in \N$ such that $d_i\geq 1$, 
\begin{equation} f(X_i)=f(\min\{j\in \N,A_j=V_i\})=\min\{f(j), A_j=V_i\}=\min\{E_{i,k},1\leq k \leq d_i \}=E_{i,k_i}. \label{abcd1} \end{equation}
Then by a similar argument,
\begin{equation} \{f(Y_i),f(Z_i)\}_{1\leq i \leq N} =\{E_{i,j},E_{i,k_i}\}_{(i,j)\in I_2}.\label{abcd2} \end{equation}

Toward Algorithm \ref{Alg4}, note that $E_{i,k_i}$ is an exponential random variable of mean $d_i/\sigma$ and that $k_i$ is uniform in $\{1,\dots ,d_i\}$, hence we may assume that 
\begin{equation} \tilde X_i=E_{i,k_i} \quad ; \quad U_{X,i}=k_i. \label{abcd3} \end{equation} 
Then, note that conditionally on $(k_i)_{1\leq i \leq \n}$ and on $(E_{i,k_i} )_{1\leq i \leq \n}$, $\bigcup_{(i,j)\in I_2} P_{i,j}\times \{E_{i,k_i}\} \times \{j\}$ is a Poisson point process on $\R^{+3}$ of intensity
\begin{equation} \sum_{(i,j)\in I_2} \1_{E_{i,k_i}\leq x} dx \times \delta_{E_{i,k_i}} \times \delta_{j}. \label{1203} \end{equation}
Also note that conditionally on $(\tilde X_i)_{1\leq i\leq  \n}$ and on $(U_{X,i})_{1\leq i\leq  \n}$, $\{\hat Y_i,\hat Z_i, U_{Z,i}\}_{i\in \N}$ is also a Poisson point process with the same intensity as in \eqref{1203}. So we may assume that 
\begin{equation} \{\hat Y_i,\hat Z_i, U_{Z,i}\}_{i\in \N}=\bigcup_{(i,j)\in I_2} P_{i,j}\times \{E_{i,k_i}\} \times \{j\}. \label{abcd4} \end{equation}
Therefore, by "keeping" the first point of each line $\{y,z,u\}_{y\in \R^+}$ in both side and by "deleting" the last coordinate, we have,
 \begin{equation} \{\tilde Y_i, \tilde Z_i \}_{1\leq i \leq N}=\{E_{i,j},E_{i,k_i}\}_{(i,j)\in I_2}.\label{abcd4} \end{equation}

Finally by \eqref{abcd1} and \eqref{abcd3} we have $(f(X_i))_{i\in \N}=(\tilde X_i)_{i\in \N}$. Also by \eqref{abcd2}, \eqref{abcd4}, and by monotony of $(Y_i)_{i\in \N}$ and $(\tilde Y_i)_{i\in \N}$ we have $(f(Y_i), f(Z_i))_{i\in \N}=( \tilde Y_i, \tilde Z_i)_{i\in \N}$.\end{proof}

\subsection{Convergence of $\D$-trees toward ICRT: proof of Proposition \ref{pro:First_branches_D}}  \label{5.3}
Fix $(\Dn)_{n\in \N}\in\OmegaD^\N$ and $\Theta\in \OmegaT$. We assume that $\Dn\ply \Theta$, that is $d_1^n/n\to 0$, and $\n^n_0\to \infty$, and $\forall i\in \N$, $d_i^n/\s^n\to \theta_i$. We prove that $(\sigma^n/n)\TSB^n \to \TSB^\Theta$ holds weakly for the joint topology. To this end, we prove a line by line convergence of Algorithm \ref{Alg4} then conclude by using Proposition \ref{timechange}.

\begin{lemma} \label{mass} If $\Dn\ply \Theta$ then jointly in distribution for every $i\in \N$, $\tilde X^\Dn_i\to X^\Theta_i$ and $\tilde \mu^\Dn\to \mu$. (That is jointly for every $l\in \R^+$, $\tilde \mu^\Dn[0,l]\to \mu[0,l]$.)
\end{lemma}
\begin{proof} The convergence of the variables $(\tilde X_i^\Dn)_{i\in \N}$ is immediate from their definitions. So by the Skorokhod representation theorem we may assume that almost surely for every $i\in \N$, $\tilde X^\Dn_i \limit X^\Theta_i$.

We prove that jointly with the previous convergence, $\tilde \mu^\Dn\to \mu^\Theta$ weakly. To this end, for every $\D\in \OmegaD$ and $k\in \N$ we split $x\mapsto \tilde \mu^\D[0,x]$  into
\[ F^{\D,\leq k}: x\mapsto \sum_{i=1}^k  \1_{\tilde X^\D_i\leq x} (d^\D_i-1)/\sigma^\D \quad \text{and} \quad  F^{\D,>k}: x\mapsto \sum_{i=k+1}^\infty  \1_{\tilde X^\D_i\leq x} (d^\D_i-1)/\sigma^\D. \]
And we show that if $(k_n)_{n\in\N}$ is a sequence increasing sufficiently slowly to $+\infty$ then for every $x>0$ (a) almost surely $F^{n,\leq k_n}(x)\to \sum_{i=1}^\infty  \1_{X^\Theta_i\leq x}\theta_i$ and (b) $F^{n,> k_n}(x)\to \theta_0^2 x$ in probability. Summing (a) and (b) yields the desired result. 

Toward (a), recall that  for every $i\in \N$, a.s. $\tilde X^\Dn_i \to X^\Theta_i$, and that $\Dn\ply \Theta$. So if $(k_n)_{n\in\N}$ increases sufficiently slowly to $+\infty$ then by bounded convergence for every $x\in \R^+$ a.s. 
\begin{equation*}  F^{n,\leq k_n}(x)= \sum_{i=1}^{k_n}  \1_{\tilde X^n_i\leq x} (d^n_i-1)/\sigma^n \limit_{n\to \infty}  \sum_{i=1}^\infty \1_{X^\Theta_i\leq x}\theta^\Theta_i.  \end{equation*} 

To prove (b) we use a second moment method. We have for every $x\geq 0$,
\[ \E\left [F^{n,> k_n}(x) \right ]=\sum_{i=k_n+1}^\infty \frac{d^n_i-1}{\sigma^n} \proba\left (\tilde X^n_i\leq x\right )=\sum_{i=k_n+1}^\infty \frac{d^n_i-1}{\sigma^n} \left(1-\exp\left (x\frac{d^n_i}{\sigma^n}\right ) \right ). \]
Then, since $k_n\to \infty$ and since $\Dn\ply \Theta$,
\begin{equation*}
\E\left [F^{n, >k_n}(x) \right ]  \sim  \sum_{i=k_n+1}^\infty x \frac{d^n_i-1}{\sigma^n} \frac{d^n_i}{\sigma^n}  =  x- x\sum_{i=1}^{k_n} \frac{d_i^n(d_i^n-1)}{(\sigma^n)^2}  \to  x-x\sum_{i=1}^{+\infty}\theta_i^2=x\theta_0^2,
\end{equation*}
where the last convergence holds by bounded convergence when $(k_n)_{n\in \N}$ increases sufficiently slowly to $+\infty$.
Also similarly for every $x\geq 0$,
\[ \Var\left [F^{n, > k_n}(x) \right ]\leq \sum_{i=k_n+1}^{+\infty} \left ( \frac{d^n_i-1}{\sigma^n}\right )^2 \proba\left (\tilde X^n_i\leq x\right )\leq \frac{d^n_{k_n+1}}{\sigma^n} \E \left [F^{n,>k_n}(x) \right ] =o(1). \] 
And (b) follows.
\end{proof} 
It directly follows from Lemma \ref{mass}, and since $(\hat Y_i^n,\hat Z_i^n)$ is a Poisson point process  on $\{(y,z)\in \R^{+2}: y\geq z \}$ of intensity $dy\times d\tilde \mu^n$, that the following weak joint convergence holds: 
\begin{equation} \hat \TSB^\Dn:=(\tilde X_i^n,\hat Y_i^n,\hat Z_i^n)_{i\in \N}\limit_{n\to \infty} \TSB^\Theta. \label{2711} \end{equation}
We omit the trivial details, and refer for instance to Lemma 4.24 (ii') of Kallenberg \cite{Kallenberg} for more precision on convergence of Poisson point process.

The next lemma implies that we may replace in \eqref{2711} $(\hat Y_i^n,\hat Z_i^n)_{i\in \N}$ by $(\tilde Y_i^n,\tilde  Z_i^n)_{i\in \N}$.
\begin{lemma} \label{SBtild} Assume that $\Dn\ply \Theta$ then for every $i\in \N$,
\[ \proba\left (\hat Y^n_i=\tilde Y^n_i \text{ and }\hat Z^n_i=\tilde Z^n_i \right )\limit_{n\to \infty} 1. \]
\end{lemma}
\begin{proof} Let for $\D\in \OmegaD$, $m^\D:=\inf( (\tilde Y^\D_i)_{i\in \N} \backslash (\hat Y^\D_i)_{1\leq i \leq N} )$. 
Note that it is enough to show that for every $l\in \R^+$, $\proba (m^n> l)\to 1$. To this end, we lower bound $\proba (m^\D> l)$ for $\D$ and $l$ fixed.

First let us recall some notations introduced in the proof of Proposition \ref{timechange}. Let $I:=\{(i,j)\in \N^2, 1\leq i \leq \n, 1\leq j \leq d_i\}.$ For every $(i,j)\in I$, let $P_{i,j}$ be a Poisson point process on $\R^+$ with rate $1/\s$ and let $E_{i,j}= \min P_{i,j}$. For every $i$ such that $d_i\geq 1$ let $k_i=\argmin \{ E_{i,j}, 1\leq j \leq d_i\}$. Then let $I_2=I\backslash\{i,k_i\}_{1\leq i \leq \n}$.

Now, recall that $\{\tilde Y_i\}_{i\in \N} \backslash \{\hat Y_i\}_{i\in \N}=\bigcup_{(i,j)\in I_2}\{P_{i,j}\backslash\{E_{i,j}\} \}$. So that by an union bound,
\begin{equation}  \proba(m> l) 
\leq \sum_{i=1}^\n \proba \Big (\exists 1\leq j \leq d_i: (i,j)\in I_2, \#(P_{i,j}\cap [0,l])\geq 2 \Big ).
\label{meteo}
\end{equation}
To simplify the notation, let $\sum_{1\leq i \leq \n} S_i$ denote the sum above. We split this sum in two according to "whether $d_i$ is small or large", and we upper bound each part. 

On the one hand, note that if there exists $1\leq j \leq d_i$ such that $(i,j)\in I_2$ and $\#(P_{i,j}\cap [0,l])\geq 2$ then $\#(\bigcup_{1\leq j \leq d_i} P_{i,j}\cap[0,l])\geq 3$ and $d_i\geq 2$. So that for every $1\leq i\leq \n$, 
\begin{equation*} S_i
 \leq  \proba \left (\#\left (\bigcup_{1\leq j \leq d_i} P_{i,j}\cap[0,l] \right )\geq 3 \right ) =1-e^{-ld_i/\sigma} \left (1+\frac{ld_i}{\sigma}+\frac{l^2d^2_i}{2\sigma^2} \right ) \leq \left (l\frac{d_i}{\sigma}\right )^3, \end{equation*} where the last inequality comes from the fact that for every $x\geq 0$, $1-e^{-x}(1+x+x^2/2)\leq x^3$. 
Then for every $\e>0$,
\begin{equation} \sum_{i, d_i\leq \e \sigma} S_i\leq \sum_{i, d_i\leq \e \sigma} \1_{d_i\geq 2} \left (l \frac{d_i}{\sigma}\right )^3 \leq 2l^3\e \sum_{i, d_i\leq \e \sigma}  \frac{d_i(d_i-1)}{\sigma^2}\leq 2l^3\e. \label{elena1} \end{equation}

On the other hand, we have by a similar argument,
\[ S_i\leq d_i \proba \left ( \#(P_{i,1}\cap[0,l]) \geq 2 \right ) \leq d_i l^2/\sigma^2, \]
and when $\e\sigma >1$,
\begin{equation} \sum_{i, d_i> \e \sigma} P_i \leq \sum_{i, d_i> \e \sigma} l^2\frac{d_i}{\sigma^2} \leq \sum_{i, d_i> \e \sigma} \frac{l^2 }{\e\sigma-1} \frac{d_i(d_i-1)}{\sigma^2} \leq \frac{l^2}{\e\sigma-1}. \label{elena2} \end{equation}

Therefore, summing \eqref{elena1} and \eqref{elena2}, we have for every $n\in \N$,
\begin{equation} \proba(m^n> l)\leq 2l^3\e +\frac{l^2}{\e\sigma^n-1}.  \label{elena} \end{equation}
Then as $n\to +\infty$, $\Dn\ply \Theta$ so $\sigma^n \to \infty$ and the right hand term in \eqref{elena} converges to $2l^3\e$. Since $\e$ is arbitrary, $\proba(m^n> l)\to 0$. Since $l$ is arbitrary, this concludes the proof.
\end{proof}
\begin{proof}[Proof of Proposition \ref{pro:First_branches_D}] Recall the definitions of $f^\D$ and $E_i^\D$ introduced above Proposition \ref{timechange}. It  follows from Lemma \ref{SBtild} and  \eqref{2711} that weakly $\tilde \TSB^n \to \TSB^\Theta$. Then by Proposition \ref{timechange}, $f^n(\TSB^n)$ and $\tilde \TSB^n$ have the same distribution. So, by Skorokhod representation theorem we may assume that both the convergence and the equality holds almost surely. Hence, almost surely
\begin{equation} f^n(\TSB^n)\to  \TSB^\Theta. \label{0412} \end{equation}

Next, for $n\in \N$ let $\lambda^n:=  n/\sigma^n$. We have, for every $x\in \R^+$ and $n\in \N$, 
\begin{equation*}\E\left [ f^n\left ( \left \lfloor \lambda^n x \right \rfloor \right ) \right ]=\sum_{i=1}^{ \lfloor \lambda^n x \rfloor } \E[E^n_i] =\sum_{i=1}^{  \lfloor \lambda_n x \rfloor } \frac{\sigma^n}{n-i}= \lfloor \lambda^n x \rfloor  \frac{\sigma^n}{n-O(\lambda^n)}\limit_{n\to \infty} x.
\label{riz1} \end{equation*}
 Similarly,
\[ \Var[ f^n( \lfloor \lambda^n x \rfloor)]=\sum_{i=1}^{ \lfloor \lambda^n x \rfloor} \Var[E_i] =\sum_{i=1}^{ \lfloor \lambda^n x \rfloor} \left ( \sigma^n/(n-i)  \right )^2 \limit 0.\]
Therefore $x\mapsto f^n(\lambda_n x)$ converges in distribution for $\|\cdot \|_{\infty}$ toward the identity on any interval. Then by Skorokhod representation theorem we may assume that this convergence holds almost surely. It directly follows from \eqref{0412} that $(\sigma^n/n)\TSB^\Dn\limit \TSB^\Theta$.
\end{proof} 

To conclude the section let us slightly strengthen Proposition \ref{pro:First_branches_D} by considering $(\1_{X_i=Z_j})_{i,j}$, and $\mu^\D:i \mapsto \sum_{i=1}^{\n}(d_i-1)\delta_{X_i}$. This extension is notably important to study many other object related to $\D$-trees and ICRT introduced in \cite{LoopICRT}.
\begin{proposition} \label{pro:First_branches_D+} We have for the weak topology, jointly with Proposition \ref{pro:First_branches_D}, 
\[ \forall l>0, \mu^n[0,\sigma^n l/n]/\sigma^n \limit \mu^\Theta[0,l] \quad \text{and} \quad (\1_{X_i^n=Z_j^n})_{i,j\in \N}\limit (\1_{X_i^\Theta=Z_j^\Theta})_{i,j\in \N}. \]
\end{proposition}
\begin{proof} For $\mu$, note that by Lemma \ref{mass}, jointly with \eqref{2711}, we can also have $\tilde \mu^n\limit \mu^\Theta$. Moreover, by Skorokhod's representation theorem we may assume that $f^n(\TSB^n)=\tilde \TSB$. In that case,
\begin{equation} \mu^n[0,i]=\sum_{j=1}^n (d^n_j-1) \1_{X^n_i\leq i} = \sum_{j=1}^n (d^n_j-1) \1_{\tilde X^n_i\leq f^n(i)} =\tilde \mu [0,f^n(i)]/\sigma. \label{13/02/10h} \end{equation}
Then recall that $x\mapsto f^n(nx/\sigma^n)$ converges to the identity on any interval, so $\mu$ properly rescaled converges. The indicators can be treated similarly by strengthening \eqref{2711}. (Again it's just that Poisson point process converge when their rates converge.) 
\end{proof}

\section{Gromov--Prokhorov convergence of $\D$-trees} \label{GPsection}
\subsection{Preliminary: vertices with small degree behaves like leaves} \label{6.1}
To prove the GP convergence, it suffices to prove that the distance matrix between random vertices converges (see Lemma \ref{equivGP2}).
To this end, we introduce a generalization of Algorithm \ref{D-tree} which constructs sequentially the subtree spanned by the root and $W_1,\dots, W_i$ where $W=\{W_i\}_{1\leq i \leq \n^\D}$ is an arbitrary permutation of $\V^\D$. Then, we couple it with Algorithm \ref{D-tree} to show that whenever the degrees of $W_1, W_2,\dots $ are small, those subtrees behaves like subtrees spanned by the first leaves (see Lemma \ref{CoupleSpann}). This agrees with the intuition that vertices with small degree behave like leaves. Naturally, this is also true for random vertices, which typically are distinct and have small degree.

\begin{algorithm} \label{Spann} \emph{General stick-breaking construction of $\D$-tree.} 
\begin{compactitem}
\item[-] Let $A^\D=(A_1,\dots, A_{\n-1})$ be a uniform $\D$-tuple. 
\item[-] 
Let $T^{\D,W}_1:=(\{A_1\},\emptyset)$ then for every $2\leq i \leq \n$ let 
\[ T^{\D,W}_i:=\begin{cases} T_{i-1}\cup \{A_{i-1},A_{i}\} & \text{if } A_{i}\notin T_{i-1}, \\T_{i-1}\cup \{A_{i-1},W_{\inf\{k, W_k\notin T_{i-1}\}}\}& \text{if } A_{i} \in T_{i-1} \text{ or } i=\n .
\end{cases} \]
\item[-] Let $T^{\D,W}$ denote the rooted tree $(T_{\n},A_1)$.
\end{compactitem}
\end{algorithm}
\begin{remark} If for every $1\leq i \leq N+1$, $W_i=L_i$ then Algorithm \ref{D-tree} and Algorithm \ref{Spann} follow the exact same steps, hence $T^{\D,W}=T^\D$.
\end{remark}
\begin{proposition} \label{proof_algo2}
For every $\D\in \OmegaD$, and $W$ permutation of $\V^\D$, $T^{\D,W}$ is a  $\D$-tree.
\end{proposition}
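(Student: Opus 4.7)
The plan is to verify structurally that the output of Algorithm~\ref{Spann} is a rooted labeled tree on the vertex set $\V^\D$ with children-degree sequence $\D$, following the same three-step pattern used for Proposition~\ref{proof_algo}.

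First I would check well-definedness. At each step $i \in \{2, \dots, \n\}$ the fresh-vertex branch picks $W_k$ with $k = \inf\{k : W_k \notin T_{i-1}\}$, and such $k$ exists because $W$ enumerates the $\n$ vertices of $\V^\D$ while $|T_{i-1}| = i - 1 < \n$.

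Next I would verify the tree structure together with the identification of the vertex set. Starting from $T_1 = (\{A_1\}, \emptyset)$, each of the $\n - 1$ subsequent steps adds exactly one new vertex — either $A_i$ (when $A_i \notin T_{i-1}$) or the selected $W_k$ — attached by a single edge to a vertex already in $T_{i-1}$. The tree property is preserved at every step, and after step $\n$ we have $\n$ pairwise distinct vertices of $\V^\D$, so the vertex set coincides with $\V^\D$.

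Finally I would compute the children-degrees of the rooted tree $(T_\n, A_1)$. Every added edge has the form $\{A_{i-1}, v\}$ where $v$ is the new vertex at step $i$, so in the rooted tree $v$ is a child of $A_{i-1}$. Hence the number of children of any $V_a$ equals $|\{j \in \{1, \dots, \n - 1\} : A_j = V_a\}| = d_a$, where the last equality uses that $A$ is a $\D$-tuple. Combining these three verifications shows that $T^{\D,W}$ is, for every realisation of $A$, a rooted labeled tree on $\V^\D$ with children-degree sequence $\D$. Uniformity of its distribution on this finite set — which is what is needed for the terminology "$\D$-tree" — is inherited from the uniform $\D$-tuple $A$ by exactly the same argument as in the Appendix proof of Proposition~\ref{proof_algo}, adapted in the obvious way so that fresh vertices are taken from $W$ rather than from $(L_1, L_2, \dots)$; I expect this adaptation to be the only step requiring any care, and it is minor because $W$ enters only via the labelling of newly inserted leaves and not through any structural branching of the algorithm.
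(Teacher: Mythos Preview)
Your structural verification (well-definedness, tree property, degree count) is correct and matches the paper's approach, which likewise reduces uniformity to showing that $A \mapsto T^{A}$ is a bijection between $\D$-tuples and trees with degree sequence $\D$, and proves injectivity by reconstructing $A$ from $T^{A}$ step by step.

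However, your claim that the adaptation from Proposition~\ref{proof_algo} is minor ``because $W$ enters only via the labelling of newly inserted leaves'' understates the one genuine difference. In Algorithm~\ref{D-tree} the freshly inserted vertices are true leaves $L_k$ with $d_{L_k}=0$, so they never occur in the tuple $A$; when reconstructing $A_{i+1}$ one therefore searches only among $\{A_1,\dots,A_i\}$. In Algorithm~\ref{Spann} the inserted $W_{k}$ may have $d_{W_k}\geq 1$ and may itself equal some later $A_j$. Concretely, when $W_{k_i}$ is a child of $A_i$ in $T^A$, the next symbol $A_{i+1}$ can be $W_{k_i}$ itself (if $W_{k_i}$ lies on the path toward the next target $W_{k_{i+1}}$), not only a vertex already in $T_i^A$. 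The paper handles this by identifying $A_{i+1}$ as the vertex in $T_i^A \cup \{W_{k_i}\}$ closest to $W_{k_{i+1}}$, after checking that $k_{i+1}=\inf\{k:W_k\notin T_i^A\cup\{W_{k_i}\}\}$ is determined independently of which of the two alternatives for $A_{i+1}$ holds. This is a small but real wrinkle in the injectivity argument, and it is precisely where the proof of Proposition~\ref{proof_algo2} departs from that of Proposition~\ref{proof_algo}.
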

 \begin{proof} It is well known that there is exactly $(\n-1)!/(d_1!d_2!d_3!\dots d_\n!)$ (see e.g. \cite{FoataFuchs}) trees with degree sequence $\D$, which is also exactly the number of $\D$-tuples. So it suffices to check that the algorithm provides an injection from $\D$-tuples to trees with degree sequence $\D$. Note that the algorithm creates a graph with exactly $\n-1$ edges and that is connected so is a tree. Moreover, note that if we orient the edges from the root to the leaves, each edges leaving a vertex $V_i$ corresponds to an instant where $A_j=V_i$ so the $T^{\D,W}$ has the desired degree sequence. Finally to see that the algorithm is injective, note that $A_1,A_2,\dots$ first describes the branches from the root to $W_1$, then the branch starting from this branch to $W_2$ (there may be $0$ steps if $W_2$ was seen before $W_1$) then from the previous subtree to $W_3$ and so on. We omit the details, and refer the reader to \cite{FoataFuchs2} where similar proofs for similar algorithms are fully detailed. 
 \end{proof}

Before stating the last result of this section, let us define the relabelling operation. For every graph $G=(V,E)$ and bijection $f:V\to V'$, let $f(G):=(V',\{\{f(x),f(y)\}\}_{\{x,y\}\in E })$.

\begin{lemma} \label{CoupleSpann} Let $\D\in \OmegaD$ and $W=\{W_i\}_{1\leq i \leq \n^\D}$ be a permutation of $\V^\D$. Let $1\leq k \leq N^\D$ and let $f_k:\V^\D\to \V^\D$ be a bijection such that:
\[ \forall 1\leq i \leq k \quad f_k(W_i)=L_i \quad ; \quad \forall V\notin \{W_i\}_{1\leq i \leq k}\cup \{L_i\}_{1\leq i \leq k} \quad  f_k(V)=V. \]
Then for every $1\leq l \leq \n^\D$, (where by convention for $i\in \N$, $d^\D_{V_i}:=d^\D_i$)
\[ \proba \left (f_k\left (T^{\D,W}_{Y^\D_k} \right )\neq T^\D_{Y^\D_k} \right ) \leq \proba( Y^\D_k> l)+l \left (d^\D_{W_1}+\dots d^\D_{W_k}\right)/(\n^\D-1). \]
\end{lemma}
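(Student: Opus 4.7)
The plan is to couple Algorithms \ref{D-tree} and \ref{Spann} by using the same uniform $\D$-tuple $A^\D$ for both. Under this coupling, both algorithms follow identical steps whenever $A_i \notin T_{i-1}$; they can only differ at the "repetition" steps, where Algorithm \ref{D-tree} attaches the next unused $L_j$ and Algorithm \ref{Spann} attaches the next unused $W_j$.

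The key deterministic observation I would establish first is the following: on the event
\[ \mathcal{A}_k := \bigl\{ \{W_1,\dots,W_k\} \cap \{A_1,\dots,A_{Y^\D_k-1}\} = \emptyset \bigr\}, \]
we have $f_k(T^{\D,W}_{Y^\D_k}) = T^\D_{Y^\D_k}$. Indeed, on $\mathcal{A}_k$ the vertices $W_1,\dots,W_k$ are never added to the tree via the $A$-steps before the $k$-th repetition, so at the $j$-th repetition ($1\leq j \leq k$) the index $\inf\{m : W_m \notin T^{\D,W}_{i-1}\}$ equals exactly $j$. Consequently Algorithm \ref{Spann} attaches $W_1,\dots,W_k$ as leaves at the same edges at which Algorithm \ref{D-tree} attaches $L_1,\dots,L_k$. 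Since $A$ is identical in both couplings and $f_k$ fixes every vertex not in $\{W_i,L_i\}_{1\leq i \leq k}$ while swapping $W_i$ with $L_i$, the relabelling identifies the two trees.

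It then suffices to bound $\proba(\mathcal{A}_k^c)$. I would split on $\{Y^\D_k \leq l\}$:
\[ \proba(\mathcal{A}_k^c) \leq \proba(Y^\D_k > l) + \proba\bigl(\exists\, 1\leq i \leq k,\ W_i \in \{A_1,\dots,A_{l-1}\}\bigr). \]
For the second term, a union bound over $i$ and over the positions $j$ gives
\[ \proba\bigl(\exists\, 1\leq i \leq k,\ W_i \in \{A_1,\dots,A_{l-1}\}\bigr) \leq \sum_{i=1}^{k} \sum_{j=1}^{l-1} \proba(A_j = W_i). \]
Since $A^\D$ is a uniform $\D$-tuple of length $\n^\D-1$ in which each vertex $V$ appears exactly $d^\D_V$ times, the marginal $\proba(A_j = W_i) = d^\D_{W_i}/(\n^\D - 1)$. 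Summing yields the bound $l(d^\D_{W_1} + \cdots + d^\D_{W_k})/(\n^\D-1)$, which combined with the first term gives the stated inequality.

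The only nontrivial step is the deterministic identification under the coupling; the probabilistic estimate itself is a routine union bound. I would therefore spend the bulk of the proof carefully checking that the two algorithms stay in lockstep on $\mathcal{A}_k$ — in particular that no $W_j$ with $j>k$ is prematurely attached in Algorithm \ref{Spann} before the $k$-th repetition, which follows because at each of the first $k$ repetitions the smallest index $m$ with $W_m \notin T^{\D,W}_{i-1}$ is exactly one of $\{1,\dots,k\}$, thanks to $\mathcal{A}_k$ and an induction on the number of repetitions performed so far.
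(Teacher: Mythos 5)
Your proposal is essentially identical to the paper's proof: you couple the two algorithms on the same uniform $\D$-tuple $A^\D$, observe that on the event $\{W_i\}_{i\leq k}\cap\{A_1,\dots,A_{Y^\D_k}\}=\emptyset$ the two constructions agree up to the relabelling $f_k$, and then bound the complementary event by $\proba(Y^\D_k>l)$ plus a union bound using $\proba(A_j=W_i)=d^\D_{W_i}/(\n^\D-1)$. The only cosmetic difference is that you spell out the induction behind the deterministic identification in a little more detail than the paper does.
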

\begin{proof} Note that $f_k(T^{W}_{Y_k})=T_{Y_k}$ whenever $\{W_i\}_{1\leq i \leq k}\cap \{A_1,\dots, A_{Y_k}\}=\emptyset$, since in this case, up to relabelling, Algorithm \ref{D-tree} and Algorithm \ref{Spann} follow the exact same steps. Therefore: 
\begin{align*} \proba \left (f_k(T^{W}_{Y_k})\neq T_{Y_k} \right ) & \leq  \proba( Y_k> l)+\proba (\{W_i\}_{1\leq i \leq k}\cap \{A_1,\dots, A_{l}\}\neq \emptyset), 
\\ & \leq  \proba( Y_k> l)+\sum_{i=1}^l \sum_{j=1}^k  \proba (A_i=W_j),
\\ & =  \proba( Y_k> l)+l \left (d_{W_1}+\dots d_{W_k}\right)/(\n-1). \qedhere
\end{align*}\end{proof}
 \subsection{GP convergence of $\D$-trees toward ICRT: proof of Theorem \ref{D_GP_T}} \label{6.2} 
 To simplify the notations we write for $\D\in \OmegaD$, $\lambda^\D:=\n^\D/\s^\D$.
 
 First recall the assumptions of Theorem \ref{D_GP_T}: $\Dn \ply \Theta$, $\M^n \to 0$, and $\mu^\Theta=\infty$. In this case we have by Proposition \ref{pro:First_branches_D} the following weak joint convergence
\begin{equation} ( ( Y_i^n/\lambda^n )_{i\in \N},(Z_i^n/\lambda^n )_{i\in \N} )  \limit ((Y_i^\Theta)_{i\in \N},(Z_i^\Theta)_{i\in \N} ). \label{1106.23} \end{equation}
Moreover, note that the distance between the leaves of a tree constructed by stick breaking is described by the positions of the cuts and the glue points. More precisely, for every $i<j\in \N$, there exists a measurable function $g_{i,j}: \R^{+2j}\mapsto \R^+$ such that for every $n\in \N$ large enough (such that there is at least $j$ leaves) and $\Theta\in \OmegaT$:
\[  (\d^n/\lambda^n)(L^n_i, L^n_j)=g_{i,j} ( (Y^n_k/\lambda^n)_{1\leq k\leq j}, ( Z^n_k/\lambda^n )_{1\leq k\leq j} ), \]
and
\[ \d^\Theta(Y^\Theta_i, Y^\Theta_j)=g_{i,j}((Y^\Theta_k)_{1\leq k\leq j}, (Z^\Theta_k)_{1\leq k\leq j}).\]
Therefore, by \eqref{1106.23} we have the following weak joint convergence 
\begin{equation}  ((\d^n /\lambda^n)(L_i^{n}, L_j^{n} ) )_{i,j\in \N} \limit (\d^\Theta(Y_i^{\Theta}, Y_j^{\Theta} )_{i,j\in \N}). \label{1106.232} \end{equation} 

Now for every $n\in \N$ let $(W^n_i)_{i\in \N}$ be a family of i.i.d. random variables with law $\M^n$. Recall by Lemma \ref{equivGP} and by \cite{ICRT1} Proposition 3.2 that it suffices to prove that we have the next weak joint convergence
\begin{equation} ((\d^n /\lambda^n)(W_i^{n}, W_j^{n} ) )_{i,j\in \N} \limit (\d^\Theta(Y_i^{\Theta}, Y_j^{\Theta} ))_{i,j\in \N}. \label{1106.233} \end{equation}
To this end we use the coupling introduced in Section \ref{6.1} to derive \eqref{1106.233} from \eqref{1106.232}.

Beforehand, note that for every $n\in \N$, $(W^n_i)_{i\in \N}$ is not a permutation of $\V^n$ so we can not directly apply  Lemma \ref{CoupleSpann}. However, since $\M^n \to 0$ we have for every $i,j\in \N$, $\proba(W_i=W_j)\to 0$. Hence, there exists a family $(\tilde W^n)_{n\in \N}=((\tilde W^n_i)_{1\leq i \leq \n^n})_{n\in \N}$ of random permutations of $\V^n$ such that for every $i\in \N$, $\proba(W^n_i=\tilde W^n_i)\to 0$.

We now apply Lemma \ref{CoupleSpann} to those permutations: We have for every $n,k\in \N$ and $l\in \R^+$,
\begin{equation*} \proba \left (f^n_k\left (T^{n,\tilde W^n}_{Y^n_k} \right )\neq T^n_{Y^n_k} \right ) \leq \proba \left ( Y^n_k> \lambda^n l\right )+\E\left [\frac{\lambda^n l}{n-1} \left (d^n_{\tilde W^n_1}+\dots +d^n_{\tilde W^n_k} \right ) \right ], \label{crepe} \end{equation*}
where $f^n_k$ is the relabelling function defined in Lemma \ref{CoupleSpann}. Moreover note that the last term converges to $0$. Indeed since $\M^n \to 0$, for every $i\in \N$, $d^n_{\tilde W^n_i}/\sigma^n\to 0$ in probability. And by bounded convergence the last also convergence holds in expectation since $d^n_{1}/\sigma^n\to \theta_1<\infty$. Therefore for every $k,l$ fixed,
\[ \limsup_{n\to \infty} \proba \left (f^n_k\left (T^{n,\tilde W^n}_{Y^n_k} \right )\neq T^n_{Y^n_k} \right ) \leq \limsup_{n\to \infty} \proba \left ( Y^n_k> \lambda^n  l\right ). \] 
Therefore, since $l$ is arbitrary and since by Proposition \ref{pro:First_branches_D} $\lambda^n Y^n_k\to Y^\Theta_k$ weakly as $n\to \infty$, we have,
\begin{equation} \limsup_{n\to \infty} \proba \left (f^n_k\left (T^{n,\tilde W^n}_{Y^n_k} \right )\neq T^n_{Y^n_k} \right )=0. \end{equation}

Finally, since relabelling does not change the distance in the tree,
\[ d_{TV} \left (  (\d^n(L_i^{n}, L_j^{n} )  )_{i,j\leq k},  (\d^n(\tilde W_i^{n}, \tilde W_j^{n} )    )_{i,j\leq k} \right  ) \to 0,\]
where $d_{TV}$ stands for the total variation distance. Therefore since $\forall i\in \N$, $\proba(W^n_i=\tilde W^n_i)\to 0$,
\begin{equation} d_{TV} \left (  (\d^n(L_i^{n}, L_j^{n} )  )_{i,j\leq k},  (\d^n(W_i^{n}, W_j^{n} )  )_{i,j\leq k} \right  ) \to 0. \label{1206.5}\end{equation}
And finally, \eqref{1106.233} directly follows from  \eqref{1106.232} and \eqref{1206.5}. This concludes the proof.

\section{Gromov--Hausdorff--Prokhorov convergence and height of $\D$-trees} \label{GHPsection}
\subsection{Preliminaries: technical results on $\tilde \mu$ and $\mu$} \label{7.1}
The aim of this section is to estimate for $\D\in \OmegaD$, the measure $\tilde \mu^\D$ introduced in Algorithm \ref{Alg4}, and  to prove a few important results on an analog $\mu:=\sum_{i=1}^{\n} (d_i-1)\delta_{X_i}$.\begin{lemma} \label{pizza}For every $\D\in \OmegaD$ and $l\in \R^+$,
\[ \proba \left ( \tilde \mu^\D[0,l]\leq \E[ \tilde \mu^\D[0,l]]/2  \right ) \leq e^{-l\E[\tilde \mu^\D[0,l]]/4}.\]
\end{lemma}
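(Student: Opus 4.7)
First, by Algorithm~\ref{Alg4} the random variable
\[ S:=\tilde\mu^\D[0,l]=\sum_{i:\,d_i\geq 2} a_i U_i, \qquad a_i := (d_i-1)/\sigma^\D, \qquad U_i := \1_{\tilde X_i \leq l}, \]
is a sum of independent bounded non-negative random variables: since $\tilde X_i$ is exponential of parameter $d_i/\sigma^\D$, the $U_i$ are independent Bernoullis of parameter $p_i:=1-e^{-ld_i/\sigma^\D}$, and only indices with $d_i\geq 2$ contribute. Set $\lambda:=\E[S]=\sum_i p_i a_i$. Thus the lemma is a Chernoff-type lower-tail bound, and I would apply the exponential Markov inequality.

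For any $t>0$, using $\E[e^{-tU_ia_i}]=1-p_i(1-e^{-ta_i})$ together with $\log(1-u)\leq -u$,
\[ \proba(S\leq \lambda/2)\;\leq\; e^{t\lambda/2}\E[e^{-tS}]\;\leq\; \exp\Bigl(\tfrac{t\lambda}{2}-\sum_i p_i(1-e^{-ta_i})\Bigr). \]
Choosing $t=l$, the task reduces to proving the deterministic inequality
\[ \sum_{i:\,d_i\geq 2}\bigl(1-e^{-ld_i/\sigma^\D}\bigr)\bigl(1-e^{-l(d_i-1)/\sigma^\D}\bigr)\;\geq\;\tfrac{3}{4}\,l\lambda. \]

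The hard part is this last inequality, because the natural termwise comparison fails once $ld_i/\sigma^\D$ is large. I would split the sum according to whether $ld_i/\sigma^\D$ lies below or above an absolute constant. In the low-degree regime one has $1-e^{-ld_i/\sigma^\D}\asymp ld_i/\sigma^\D$ and, using $d_i-1\geq d_i/2$ (valid because $d_i\geq 2$), the inequality holds termwise from elementary estimates. In the high-degree regime both factors on the left are close to $1$; there I would replace the Chernoff step by the sharper $\log(1-u)\leq -u-u^2/2$, which adds an extra $\tfrac12\sum p_i^2(1-e^{-la_i})^2$ to the exponent and absorbs the loss. The underlying reason a factor $l$ appears on the right is the elementary identity $\lambda'(l)=\sum \tfrac{d_i(d_i-1)}{(\sigma^\D)^2}e^{-ld_i/\sigma^\D}\leq 1$, which forces $\lambda\leq l$; once the split is made carefully the remainder is routine (if tedious) algebraic bookkeeping.
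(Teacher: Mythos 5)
Your overall plan — Chernoff at the scale $t=l$ after recognizing $\tilde\mu^\D[0,l]$ as a sum of independent weighted Bernoullis — is the same starting point as the paper, and the reduction to a deterministic inequality is clean. However, the mechanism you propose for the ``high-degree regime'' has a genuine gap. The deficit you need to recover per term is $p_i\bigl(\tfrac{3}{4}la_i-(1-e^{-la_i})\bigr)$, which grows roughly like $\tfrac{3}{4}l a_i$ once $la_i$ is large, while the correction $\tfrac{1}{2}p_i^2(1-e^{-la_i})^2$ that the sharper bound $\log(1-u)\leq -u-u^2/2$ buys you is always at most $\tfrac{1}{2}$. A bounded gain cannot absorb an unbounded loss. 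Concretely, take $\D$ with one vertex of degree $k$ and the rest of degree $\leq 1$, and let $k\to\infty$; then $a_1\to 1$, $p_1\to 1-e^{-l}$, and the inequality $\sum p_i(1-e^{-la_i})+\tfrac12\sum p_i^2(1-e^{-la_i})^2\geq\tfrac34 l\lambda$ reads roughly $\tfrac{3}{2}\geq\tfrac{3}{4}l$, which is false for $l>2$. Any polynomial truncation of $\log(1-u)$ will have the same problem, because for large degrees $u_i=p_i(1-e^{-la_i})$ approaches $1$ and $\log(1-u_i)$ is of order $-la_i$, not of order a constant — one must keep the full logarithm to see that.

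The paper sidesteps this by an algebraic device: instead of bounding $\E[e^{-lM_i}]$ directly, it first computes $\E\bigl[\exp\bigl(-l\,\tfrac{d_i}{d_i-1}M_i\bigr)\bigr]$, where $M_i$ is the centered contribution of index $i$. This rescaling makes the parameter $ld_i/\sigma$ appear on both sides of the exponential-moment formula, and the whole expression collapses to $\exp\bigl(-xe^{-x}+\log(2-e^{-x})\bigr)$ with $x:=ld_i/\sigma$. The one-variable inequality $-xe^{-x}+\log(2-e^{-x})\leq x(1-e^{-x})/4$ (valid for all $x\geq 0$) then yields an exact, uniform bound — no log truncation — and Jensen applied to $t\mapsto t^{(d_i-1)/d_i}$ converts the exponent's $d_i$ back into $d_i-1$, which is precisely what makes the product over $i$ telescope to $\exp(l\E[\tilde\mu[0,l]]/4)$. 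If you want to follow your route rather than the paper's, you would have to replace the polynomial truncations by the exact termwise inequality $-\log\bigl(1-(1-e^{-ld_i/\sigma})(1-e^{-l(d_i-1)/\sigma})\bigr)\geq \tfrac{3}{4}l\,\tfrac{d_i-1}{\sigma}(1-e^{-ld_i/\sigma})$ and verify this two-variable analytic fact over $d_i\geq 2$; the paper's rescaling trick is essentially a way to reduce this two-variable check to a single variable.
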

\begin{proof} First note that for every $1\leq i \leq \n$, $\proba(\tilde X_i\leq l)=1-e^{-ld_i/\sigma}$, and that $\tilde \mu[0,l]-\E[\tilde \mu[0,l]]$ can be written as a sum of independent centered random variables:
\[ \tilde \mu[0,l]-\E[\tilde \mu[0,l]]=\sum_{i=1}^\n M_i \quad \text{where for $1\leq i\leq \n$} \quad M_i:=\left (\1_{\tilde X_i\leq l}-1+e^{-ld_i/\sigma} \right )(d_i-1)/\sigma. \]

We compute some exponential moments. First for every $i$ such that $d_i\geq 2$,
\begin{align} \E \left [\exp \left (-l \frac{d_i}{d_i-1}M_i \right )\right ] 
& =   \exp \left (-(ld_i/\sigma) e^{-ld_i/\sigma} \right)  \left ( \proba(\tilde X_i\leq l)+\proba( \tilde X_i>l)e^{ld_i/\sigma} \right) \notag \\
& =   \exp \left (-(ld_i/\sigma) e^{-ld_i/\sigma} \right)  \left ( 1-e^{-ld_i/\sigma}+e^{-ld_i/\sigma}e^{ld_i/\sigma} \right) \notag \\
& =  \exp \left (-(ld_i/\sigma) e^{-ld_i/\sigma}+\log \left ( 2-e^{-ld_i/\sigma}\right)\right). \notag
\end{align}
Then since for every $x\geq 0$, $-xe^{-x}+\log(2-e^{-x})\leq x(1-e^{-x})/4$,
\[  \E \left [\exp \left (-l \frac{d_i}{d_i-1}M_i \right )\right ]  \leq \exp \left ((ld_i/\sigma)  (1-e^{-ld_i/\sigma} )/4 \right). \]
So by concavity of $t\to t^{(d_i-1)/d_i}$ on $\R^+$, 
\[  \E \left [\exp \left (-l M_i \right )\right ]  \leq \exp \left (l\frac{d_i-1}{\sigma} (1-e^{-ld_i/\sigma})/4\right). \] 
Therefore, since $M_i=0$ when $d_i\leq 1$, multiplying over all $i$ such that $d_i\geq 2$ we get, 
\[  \E \left [\exp \left ( -l\left (\tilde \mu[0,l] -\E \left [ \tilde \mu[0,l] \right ] \right ) \right ) \right ]\leq \exp \left ( l\E \left [ \tilde \mu[0,l] \right ]/4 \right ).\]
Finally the desired result follows from a simple application of Markov's inequality.
\end{proof}

We now upper bound some "numbers of cuts". More precisely, for every $m\geq 0$ let
\begin{equation} \X_m:=\inf\{l\in \R^+, \E[ \tilde \mu[0,l]] \leq m\}  \quad \text{and let} \quad \xi_m:=\min \{i\in \N, \tilde \mu[0,\tilde Y_i]> m\}. \label{1206.13}\end{equation}
It is easy to check that $l\to \E[ \tilde \mu[0,m]]$ is a continuous and strictly increasing function of $m$ so that for every $0\leq m \leq (\n_0-1)/\sigma$, $\E[\tilde \mu[0,\X_m]]= m$. Moreover, we have the next upper bound on $\xi_m$:

\begin{lemma} \label{cut} For every $0\leq m \leq (\n_0-1)/(2\sigma)$,
\[ \proba \left ( \xi_m\geq 3\X_{2m}m+1\right ) \leq 2e^{-\X_{2m}m/2}. \]
\end{lemma}
\begin{proof} First by Lemma \ref{pizza} $\tilde \mu[0,\X_{2m}] \geq m$ with probability at least $1-e^{-\X_{2m}m/2}$. In the following we work conditionally on $\tilde \mu$ and assume that this event holds.

Note that $\xi_m-1=\max\{i,\tilde  \mu[0,\tilde Y_i]\leq m\}\leq \max\{i, \tilde \mu[0,\hat Y_i]\leq m\}$. Also since  conditionally on $\tilde \mu$, $\{\hat Y_i\}_{i\in \N}$ is a Poisson point process with intensity $\tilde \mu[0,l]dl$, the quantity $\max\{i, \tilde \mu[0,\hat Y_i]\leq m\}$ is a Poisson random variable with mean $\int_0^{\alpha} \tilde \mu[0,t]dt$ where $\alpha:=\max\{a\in \R^+, \tilde \mu[0,a]\leq m\}$. Therefore since by our assumption $\alpha \leq \X_{2m}$ and since $\tilde \mu[0,\alpha)\leq m$, $\xi_m-1$ is bounded by a Poisson random variable of mean $\X_{2m}m$. Finally the result follows from some basic concentration inequalities for Poisson random variables (see e.g. \cite{Massart} p.23).
\end{proof} 
\begin{lemma} \label{measuredeath} The following assertions holds:
\begin{compactitem} \item[(a)] For every $0\leq m \leq (\n_0-1)/(16\sigma)$, $\X_{2m}\leq \sigma/8.$
\item[(b)] For every $x\geq 0$, $\E[\tilde \mu[0,x]]\leq x$. Hence, for every $m\geq 0$, $m\leq \X_m$.
\item[(c)] For every $x\geq 1/2$, $\E[\tilde \mu[0,x]]\geq 1/6$.
\end{compactitem}
\end{lemma}
\begin{proof} 
Toward (a), simply note that $x\mapsto \E[\tilde \mu[0,x]]$ is increasing and that
\[ \E\left [\tilde \mu \left [0,\frac{\sigma}{8} \right ] \right ] =\sum_{i=1}^\n \frac{d_i-1}{\sigma} \left (1-e^{-\frac{d_i}{8}}\right ) \geq \left (1-e^{-\frac{2}{8}}\right )\sum_{i=1}^\n \frac{d_i-1}{\sigma} \1_{d_i\geq 2} = \left (1-e^{-\frac{2}{8}}\right )\frac{\n_0-1}{\sigma}\geq 2m.\]

Similarly for (b), note that 
\[ \E[\tilde \mu[0,x]]=\sum_{i=1}^\n \frac{d_i-1}{\sigma} \left (1-e^{-d_i\frac{x}{\sigma}}\right ) \leq \sum_{i=1}^\n \frac{(d_i-1)(d_i)x}{\sigma^2}=x. \]

Finally for (c), since for every $0\leq x\leq 1$, $1-e^{-x}\geq x/3$, note that 
\[ \E[\tilde \mu[0,1/2]]=\sum_{i=1}^\n \frac{d_i-1}{\sigma} \left (1-e^{-\frac{d_i}{2\sigma}}\right ) \geq \sum_{i=1}^\n \frac{(d_i-1)(d_i)}{6\sigma^2}=\frac{1}{6}. \qedhere \]
\end{proof}
\begin{lemma} \label{prelimGH} Let $(\Dn)_{n\in\N}\in \Omega_\D^\N$ and $\Theta\in \OmegaT$. We have the following assertions:
\begin{compactitem} 
\item[ (a)] If $\Dn\ply \Theta$, then for every $x\in \R^+$, $\psi^n(x)=x\E[\tilde \mu^n[0,x]]\limit x\E[\mu^\Theta[0,x]]=\psi^\Theta(x)$.
\item[ (b)] If $\Dn\ply \Theta$ and Assumption \ref{Hypo3} is satisfied, then $\mu[0,\infty]=\infty$ and the $\Theta$-ICRT is a.s. compact.
\end{compactitem}
\end{lemma}
\begin{proof} Note that (b) directly follows from (a). Indeed, using $\sigma^n\to \infty$, Assumption \ref{Hypo3}, (a), the Fatou's lemma applied to $(\psi^n)_{n\in \N}$ gives $\int_1^\infty \frac{1}{\psi^\Theta}<\infty$. So $\mu[0,\infty]=\infty$, and \cite[Theorem 3.3]{ICRT1} yields the a.s. compactness. 

Toward (a), we have have by bounded convergence, provided some justifications,
\begin{align} \E[\tilde \mu^n[0,x]] & = x+\sum_{i=1}^\n \frac{d^n_i-1}{\sigma^n} \left (1-e^{-xd^n_i/\sigma^n}-x\frac{d^n_i}{\sigma^n}\right ), \notag
\\ & \to x+\sum_{i=1}^\infty \theta_i(1-e^{-x\theta_i}-x\theta_i)=\E[\mu^\Theta[0,x]]. \label{2107} \end{align}
So it remains to justify \eqref{2107}. We have for every $n\in \N$ and $i\in \N$ such that $d_i^n\geq 2$,  
\[ \left (\frac{d_i^n}{\sigma^n} \right)^2\leq 2\frac{d_i^n(d_i^n-1)}{(\sigma^n)^2} \leq 2 \frac{\frac{1}{i}\sum_{j=1}^i d_j^n(d_j^n-1)}{(\sigma^n)^2} \leq \frac{2}{i}.\]
So for every $n\in \N$ and $i\in \N$, since for every $0\leq y\leq \sqrt{2}$, $e^y-1-y\leq y^2$,
\[\frac{d^n_i-1}{\sigma^n} \left (1-e^{-xd^n_i/\sigma^n}-x\frac{d^n_i}{\sigma^n}\right ) \leq  \frac{d_i^n-1}{\sigma^n} \left (\frac {d_i^n}{\sigma^n}\right )^2 \leq \frac{2^{3/2}}{i^{3/2}}.\]
And the convergence in \eqref{2107} follows.
\end{proof}
We now prove introduce important results on $\mu$ to upper-bound the length of the branches:
\begin{lemma} \label{lem:murate}Let $\mu^\D:=\sum_{i=1}^{\n} (d_i-1)\delta_{X_i}$. We have for every $1\leq i < \n-1, k\in \N$, 
\[ \proba\left (Y_{k}=i+1 \middle |A_1,\dots, A_i, Y_k\leq i, Y_{k+1}>i \right )= \frac{\mu[0,i]-k}{\n-1-i}. \]\end{lemma}
\begin{remark} Note that if $Y_k\leq i$, $Y_{k+1}>i$ then $k$ must be the number of repetitions in $A_1,\dots, A_i$. 
\end{remark}
\begin{proof} First, recall that the uniform $\D$-tuple $(A_1,A_2,\dots, A_{\n-1})$ may be constructed by reading the first coordinate of a uniform permutation of $\{(i,j)\}_{1\leq i \leq \n, 1\leq j \leq d_j}$, which has cardinal $\n-1$.  Also $Y_k=i+1$ is equivalent to $A_{i+1}\in \{A_1,\dots, A_i\}$. So, splitting according to $A_{i+1}$,
\[ \proba\left (Y_{k}=i+1 \middle |A_1,\dots, A_i, Y_k\leq i, Y_{k+1}>i \right )=\frac{\sum_{a=1}^\n \1_{x\in A_1,\dots, A_i}(d_x-\#\{1\leq j\leq i, A_j=V_x\})}{\n-1-i}.\]
The numerator can then be rewritten as $\mu[0,i]$ minus the number of repetitions in $A_1,\dots,A_i$.
\end{proof}
Recall the definitions of $(E_i^\D)_{1\leq i <\n^\D}$ and $f^\D$  from above  Lemma \ref{timechange}. By Lemma \ref{timechange}, we can and  will assume by Skorokhod representation theorem that $f^\D(\TSB^\D)=\tilde \TSB^\D$. In that case we have:
\begin{lemma} \label{SummaryF} If $f^\D(\TSB^\D)=\tilde \TSB^\D$ then (a) for every $i\leq \n-1$, $\mu[0,i]= \tilde \mu [0,f(i)]/\sigma$. In particular, (b) for every $0\leq m \leq N/\sigma$, we have $ \xi_m=\min \{i\in \N, \mu[0,Y_i]> \sigma m\}$. 
\end{lemma} 
\begin{proof} (a) is \eqref{2711}. (b) is then direct from the definition of $\xi(m)$ since $\tilde Y_i=f(Y_i)$. 
\end{proof}

\subsection{Height of $\D$-trees: Proof of Theorem \ref{thm:D_Height}}
Fix $\D\in \OmegaD$ and $x\in \R^+$. Recall from \eqref{1206.13} the definitions of $\X_m$ and $\xi_m$. By Lemma \ref{timechange} we may and will assume for the rest of the section that a.s. $f^\D(\TSB^\D)=\tilde \TSB^\D$, so that we may use Lemma \ref{SummaryF}. Also recall that $T^\D_{a}$ is the tree obtained by stopping Algorithm \ref{D-tree} after $a$ steps. In particular, $T^\D_{Y_i}$ is the subtree spawned by the root and the $i$ first leaves. Our proof of Theorem \ref{thm:D_Height} is based on 
\begin{equation} H(T)\leq H(T_{Y_{j_0}})+\sum_{j=0}^{K-1} d_H(T_{Y_{j_i}},T_{Y_{j_{i+1}}})+d_H(T_{Y_{j_K}},T), \label{1206.12} \end{equation}
where $d_H$ denote the Hausdorff distance (see Appendix \ref{GH}) and $0\leq j_0 \leq \dots \leq j_K \leq N$ and $K\in \N$ are well  chosen. We will detail this  choice later.

First, let us introduce some notations. For every let $i\leq j\in \N$, let
\begin{equation} \Delta(i,j):=d_H(T_{Y_i},T_{Y_j}). \label{1406.172} \end{equation} 
Then for every $i\in \N$, let:
\[ \Delta(0,i):=H(T_{Y_i}) \,\, ; \,\, \Delta(i,2^\geq ):=\max_{1\leq a \leq \n, d_i\geq 2} \d(T_{Y_i},V_a) \,\, ; \,\, \Delta(2^\geq ,\infty):=\max_{1\leq b \leq \n} \d(\{V_a\}_{1\leq a\leq\n, d_a\geq 2},V_b). \]
So \eqref{1206.12} can be rewritten with those notations, and by splitting the last term in two,  as:
\begin{equation} H(T)\leq \Delta(0,j_0)+\sum_{j=0}^{K-1} \Delta(j_i,j_{i+1})+\Delta(j_K,2^\geq)+\Delta(2^\geq, \infty). \label{1406} \end{equation}

The rest of the section is organized as follows: We upper bound each of the four terms of \eqref{1406} then we sum the upper bounds to prove Theorem \ref{thm:D_Height}. We first upper bound $\Delta(0,i)\leq Y_i$:
\begin{lemma} \label{2step0} For every $0\leq m \leq (\n_0-1)/(2\sigma)$ such that $\X_{2m}\leq \sigma/8$,
\[ \proba\left (Y_{\xi_m}> 3\frac{\n}{\sigma}\X_{2m}+2 \right ) \leq 5e^{-m\X_{2m}/2}.\] 
\end{lemma}
\begin{proof} Let $\alpha:=\inf\{i\in \N,\mu[0,i]> \sigma m\}$. Let us upper bound $\alpha$ and then $Y_{\xi_m}-\alpha$. By Lemma \ref{SummaryF} (a), $\alpha=\inf\{i\in \N,\tilde \mu[0,f(i)]> m\}$. So by monotony of $x\mapsto \tilde \mu[0,x]$ and by Lemma \ref{pizza},
\begin{equation} \proba(\X_{2m}\leq f(\alpha-1))\leq \proba(\tilde \mu[0,\X_{2m}]\leq m ) \leq e^{-m\X_{2m}/2}. \label{1406.15} \end{equation}
Then, since  $f$ is increasing by definition, and since for every $1\leq i \leq \n$, $f(i)=\sum_{k=1}^i E^\D_k$,
\begin{align} \proba\left ((f(\alpha-1)< \X_{2m}) \cap \left (\alpha\geq 2(\n/\sigma)\X_{2m}+2 \right)   \right )  & \leq  \proba\left (f\left ( 2 (\n/\sigma)\X_{2m}+1 \right )< \X_{2m} \right ) \notag \\ & \leq  \proba \Bigg ( \sum_{i=1}^{ \lceil 2(\n/\sigma) \X_{2m} \rceil } E_i< \X_{2m}  \Bigg). \label{1406.16} \end{align}
Furthermore, since $(E_i)$ is a family of independent exponential random variables of mean greater than $\sigma/\n$, we have by classical results on the Gamma distributions (see e.g. \cite{Massart} Section 2.4),
\[ \proba \Bigg ( \sum_{i=1}^{\lceil 2(\n/\sigma)\X_{2m} \rceil} E_i< \X_{2m}  \Bigg ) \leq e^{-\left ( \n/(2\sigma)\X_{2m} \right )}. \]
Therefore, since $m\leq N/(8\sigma)\leq \n/(8\sigma)$ we have  by \eqref{1406.15} and \eqref{1406.16},
\begin{equation} \proba \left (\alpha\geq 2(\n/\sigma)\X_{2m}+2 \right ) \leq 2e^{-m\X_{2m}/2}.  \label{1406.162} \end{equation}

Now let us upper bound $Y_{\xi_m}-\alpha$. Note from Lemma \ref{SummaryF} (b) that $Y_{\xi_m}$ is the first index of repetition in $\{A_i\}_{1\leq i\leq \n-1}$ after $\alpha$. Note also that $\alpha$ is a stopping time for $\{A_i\}_{1\leq i\leq \n-1}$. So by Lemma \ref{lem:murate}, conditionally on $\{A_{i}\}_{1\leq i \leq \alpha}$, $Y_{\xi_m}-\alpha$ is bounded by a geometric random variable of parameter $(\mu[0,\alpha]-\xi_m+1)/\n$. Hence, 
\begin{equation} \proba\left (\left . Y_{\xi_m}-\alpha\geq (\n/\sigma)\X_{2m}  \right |\{A_{i}\}_{1\leq i \leq \alpha} \right )\leq \exp \left (-\X_{2m}(\mu[0,\alpha]-\xi_m+1)/\s\right). \label{1406.17} \end{equation}
Furthermore by definition of $\alpha$, by $\X_{2m}\leq \sigma/8$, and by Lemma \ref{cut},
\[ \proba\left (\mu[0,\alpha]-\xi_m+1 \leq \sigma m/2 \right ) \leq \proba\left (\xi_m \geq \sigma m/2+1 \right ) \leq \proba\left (\xi_m \geq 3m\X_m+1 \right ) \leq 2e^{-m\X_{2m}/2}. \]
So by \eqref{1406.17},
\[\proba\left ( Y_{\xi_m}-\alpha\geq (\n/\sigma)\X_{2m} \right )\leq \proba \left (\mu[0,\alpha]-\xi_m-1\leq \sigma m/2\right )+e^{-m\X_{2m}/2} \leq 3e^{-m\X_{2m}/2}. \]
Finally summing the above equation with \eqref{1406.162} yields the desired inequality.
 \end{proof}
 
 \begin{lemma} \label{2step} We have the following upper bounds on $\Delta(j,k)$:
 \begin{compactitem}
\item[(a)] For every $1\leq j < k \leq \n_0-1$, and $t>0$, 
\[ \proba \left ( \left . \Delta(j,k) \geq t  \right |\{A_i\}_{1\leq i\leq Y_j} \right )  \leq (k-j) e^{-t(\mu[0,j]-j)/\n}.\]
\item[(b)] For every $0\leq m \leq N/(2\sigma)$ such that $\X_{2m}\leq \sigma/8$ and $t>0$,
\[ \proba \left (\Delta(\xi_{m/2},\xi_m)> t \right )  \leq 2e^{-\X_{2m}m/2}+ 3\X_{2m}m e^{-t(\sigma m)/(8\n)}.\]
\end{compactitem}
\end{lemma}
\begin{proof} Toward (a), note that $\Delta(j,k)=\max_{j<i\leq k}d(L_i,T_{Y_j})$ where $(L_i)_{1\leq i \leq \n_0}$ are the different leaves that are used in Algorithm \ref{D-tree}. Also by symmetry of the leaves, for every $j<i\leq k$
\begin{equation} d(L_i,\T_{Y_j})=^{(d)} d(L_{i+1},\T_{Y_j})=Y_{j+1}-Y_j. \label{1808a} \end{equation}
Hence, for every $t\in \R^+$
\[ \proba \left ( \left . \Delta(j,k)\geq t  \right |\{A_i\}_{1\leq i\leq Y_j} \right )  \leq (k-j)  \proba(Y_{j+1}-Y_j\geq t).\]
On the other hand, by  Lemma \ref{lem:murate}, $Y_{j+1}-Y_j$ is bounded by a geometric random variable of parameter $(\mu[0,j]-j)/\n$ and (a) follows.

Toward (b),
 note that $\Delta(\xi_{m/2},\xi_m)=0$ if $\xi_{m/2}=\xi_m$. So, by union bound, and by Lemma \ref{cut},
\begin{align*} \proba \left (\Delta(\xi_{m/2},\xi_m) > t \right ) & \leq \proba \left (\xi_m\geq 3\X_{2m}m+1 \right ) + \proba \left ((\Delta(\xi_{m/2},3\X_{2m}m+1)> t)\cap (\xi_m< 3\X_{2m}m+1 ) \right )
\\ & \leq 2e^{-\X_{2m}m/2} + \proba \left ((\Delta(\xi_{m/2},3\X_{2m}m+1)> t)\cap (\xi_{m/2}\leq 3\X_{2m}m ) \right ). \end{align*}
Furthermore by (a), since $\xi_{m/2}$ is a stopping time for $(A_i)_{1\leq i \leq \s}$,
\[ \proba \left ((\Delta(\xi_{m/2},3\X_{2m}m+1)> t)\cap (\xi_{m/2}\leq 3\X_{2m}m) \right ) \leq 3\X_{2m}m \E\left [ \exp \left (-t\frac{\mu[0,Y_{\xi_{m/2}}]-3\X_{2m}m}{\n} \right ) \right ]. \]
So, since by Lemma \ref{SummaryF} (b) a.s. $\mu[0,Y_{\xi_{m/2}}]\geq \sigma m/2$, and since $\X_{2m}\leq \sigma/8$,
\[ \proba \left ((\Delta(\xi_{m/2},3\X_{2m}m+1)> t)\cap (\xi_{m/2}\leq 3\X_{2m}m) \right ) \leq 3\X_{2m}m e^{-t(\sigma m)/(8\n)}.\]
This concludes the proof.
\end{proof}

\begin{lemma} \label{2step3} We have the following upper bounds on $\Delta(j,2^\geq)$:
\begin{compactitem}
\item[(a)] For every $1\leq j\leq \n_0-1$ and $t>0$, 
\[ \proba \left ( \Delta(j,2^\geq )> t \right ) \leq s_{\geq 2} e^{-t(\mu[0,Y_j]-j)/\n}.\]
\item[(b)] For every $0\leq m \leq (\n_0-1)/(2\sigma)$ such that $\X_{2m}\leq \sigma/8$ and $t>0$,
\[ \proba \left ( \Delta(\xi_{m},2^\geq )> t \right ) \leq 2e^{-\X_{2m}m/2}+\n_{\geq 2}e^{-t(\sigma m)/(8\n)}.\]
\end{compactitem}
\end{lemma}
\begin{proof}
Toward (a), it is enough to prove that for every $a$ such that $d_a\geq 2$ and $1\leq j\leq N$, 
\begin{equation} \proba \left (\left . d\left (V_a,T_{Y_j}  \right)> t  \right | \{A_i\}_{1\leq i\leq Y_j} \right ) \leq e^{-t(\mu[0,Y_j]-j)/\n}.\label{1709}\end{equation}
To this end, let us use Algorithm \ref{Spann}. Let $W=(W_i)_{1\leq i \leq \n}$ be any permutation of $\V^\D$ such that for every $1\leq i \leq j$, $W_i=L_i$ and such that $W_{j+1}=V_a$. Note that Algorithm \ref{D-tree} and Algorithm \ref{Spann} follow the exact same steps until $Y_{j}$ so a.s. $T_{Y_j}=T^W_{Y_j}$. Thus since $T$ and $T^W$ are both $\D$-trees, 
\[ d (V_a,T_{Y_j})=^{(d)}d^W(V_a,T^W_{Y_j}). \]

Also note that 
\begin{equation} d^W(V_a,T^W_{Y_j})\leq (Y_{j+1}-Y_j) \label{1808b} \end{equation}
 since one of the two following cases must happen:
\begin{compactitem}
\item Either $V_a\notin (A_i)_{1\leq i \leq Y_{j+1}}$ and then Algorithm \ref{D-tree} and Algorithm \ref{Spann} follow the exact same steps until $Y_{j+1}-1$. And at the next step $L_{j+1}$ is "relabelled" $V_a$. So $d^W(V_a,T^W_{Y_j})=(Y_{j+1}-Y_j)$.
\item Or $V_a\in (A_i)_{1\leq i \leq Y_{j+1}}$, and then Algorithm \ref{D-tree} and Algorithm \ref{Spann} follow the exact same steps until $i:=\inf\{k\in \N, A_k=V_a\}\leq Y_{j+1}$. So $d^W(V_a,T^W_{Y_j})=\max(i-Y_j,0)\leq Y_{j+1}-Y_j$.
\end{compactitem}
Therefore \eqref{1808b}  holds, and \eqref{1709} follows from Lemma \ref{2step}.

The proof of (b) is similar to the proof of Lemma \ref{2step} (b). We omit the details.
\end{proof}
\begin{lemma} If $\n_0\geq 2$, then for every $t\geq 0$,
 \label{2step4} 
\[ \proba\left ( \Delta(2^\geq,\infty)>3+t\frac{\n}{\sigma}+\frac{\ln \n_0}{\ln \left ((\n-1)/\n_1 \right)} \right ) \leq 2 e^{-t\n_0/\s}.\]
\end{lemma}
\begin{proof} First, note that $ \Delta(2^\geq,\infty)$ is bounded by the length of the largest path whose vertices have degree 0 or 1. Furthermore, since those paths cannot contain any glue point,
\[ \Delta(2^\geq,\infty) \leq 2+\max\{(j-i), 1\leq i \leq j \leq \n-1, (V_{A_{i+1}},V_{A_{i+2}},\dots, V_{A_j})\subset \mathcal{V}_{1}\}.\]
where $\mathcal{V}_{1}$ is the set of vertices with $d_i=1$.
Then by a simple union bound, for every $t\in \N$,
\begin{align*} \proba\left (\Delta(2^\geq,\infty)\geq 2+t \right ) & \leq \sum_{i=1}^{\n-t-1} \proba \left (V_{A_i}\notin \mathcal{V}_{1}, V_{i+1},\dots, V_{i+t}\in \mathcal{V}_{1}\right ) 
\\ & = \sum_{i=1}^{\n-t-1} \frac{\n-1-\n_{1}}{\n-1}\frac{\n_1}{\n-2}\frac{\n_1-1}{\n-3} \dots \frac{\n_1-t+1}{\n-t-1} 
\\ & =  (\n-1-\n_{1})\frac{\n_1}{\n-1}\frac{\n_1-1}{\n-2} \dots \frac{\n_1-t+1}{\n-t}\leq 2\n_0\left (\frac{\n_1}{\n-1} \right)^{t},
\end{align*}
where we use for the last inequality the fact that $(\n-\n_{1})=\n_0+\n_{\geq 2} \leq 2\n_0$ and $\n_1\leq \n-1$. Thus, by monotony of $t\mapsto \proba(\Delta(2^\geq, \infty)\geq t)$, we have for every $t>0$,
\[ \proba\left ( \Delta(2^\geq,\infty)>3+t \right ) \leq 2\n_0 \left (\frac{\n_1}{\n-1} \right)^t.\]

Therefore,
\[P:= \proba\left ( \Delta(2^\geq,\infty) > 3+t\frac{\n}{\sigma}+\frac{\ln \n_0}{\ln \left ((\n-1)/\n_1 \right)} \right ) \leq 2\left (\frac{\n_1}{\n-1} \right )^{t\n/\sigma}. \] 
Finally, since for every $x,y\geq 0$, $x^y\leq e^{-(1-x)y}$,
\[ P\leq 2 e^{-t(\n/\sigma)(1-\n_1/(\n-1) )} \leq  2 e^{-t(\n/\sigma) (\n_0/(\n-1))}\leq 2 e^{-t\n_0/\sigma}. \qedhere\]
\end{proof} 
 \begin{proof}[\textbf{Proof of Theorem \ref{thm:D_Height}}] Fix $x\geq 0$. Recall that we want to use \eqref{1406} to upper bound $H(T)$.

 Beforehand, let us make some assumptions to exclude some trivial but annoying  cases: Since $C$ can be chosen arbitrary large in Theorem \ref{thm:D_Height}, we may assume, without loss of generality, that $\psi(x)\geq 200$. This assumption together with Lemma \ref{measuredeath} (b) imply that $x\geq 10$. Similarly, we may assume $\n\geq 10$. Also we may assume $\n_0\geq 2$, since otherwise $\sigma=0$ and the result is trivial. Finally we may assume $x\leq \sigma/8$, since otherwise the result is trivial with $c=1/8$ because $H(T)\leq \n$.

We now treat the general case. Let $m:=\frac{1}{2}\E[\tilde \mu [0,x]]$ then let $K:=\inf\{k\in \N, 2^{k+1} m> \frac{\n_0-1}{32\sigma}\}$. To simplify the forthcoming computations, let for every $i\in \N$, $\alpha_i:=2^i m \X_{2^{i+1}m}$.  Note that $\X_{2m}=x\leq \sigma/8$ and that for every $1\leq i \leq K$, $2^im\leq \frac{\n_0-1}{32\sigma}$ so  by Lemma \ref{measuredeath} (a) $\X_{2^{i+1} m}\leq \sigma/8$. Hence, we have by Lemmas \ref{2step0}, \ref{2step} (b), \ref{2step3} (b), \ref{2step4} respectively, for every $0< i \leq K$:
\begin{equation} \proba\left (\Delta(0,\xi_{m})> 3\frac{\n}{\sigma}x+2 \right ) \leq 5e^{-\alpha_0/2}, \label{aaaaa} \end{equation}
\begin{equation} \proba \left (\Delta(\xi_{2^{i-1}m},\xi_{2^i m})>8\frac{\n}{\sigma}\left (\frac{\ln(3\alpha_i)}{2^i m}  +\frac{x}{2^{i/2}} \right )\right )\leq 2e^{-\alpha_i/2}+ e^{-2^{i/2} xm}, \label{bbbbb} \end{equation}
\begin{equation} \proba \left (\Delta(\xi_{2^Km},2^\geq)> 8\frac{\n}{\sigma}\left (\frac{\ln(\n_{\geq 2})}{2^K m}  +\frac{x}{2^{K/2}} \right ) \right ) \leq 2e^{-\alpha_K/2}+e^{-2^{K/2} xm},\label{ccccc} \end{equation}
\begin{equation} \proba\left ( \Delta(2^\geq, \infty) >3+\frac{\n}{\s}x+\frac{\ln \n_0}{\ln \left ((\n-1)/\n_1 \right)}  \right ) \leq 2e^{-x\n_0/\sigma}. \label{ddddd} \end{equation} 
Now let 
\[ A:=3\frac{\n}{\sigma}x+2+8\frac{\n}{\sigma}\sum_{i=1}^{K}\left (\frac{\ln(3\alpha_i)}{2^i m}  +\frac{x}{2^{i/2}} \right ) +8\frac{\n}{\sigma}\left (\frac{\ln(\n_{\geq 2})}{2^K m}  +\frac{x}{2^{K/2}} \right )+3+\frac{\n}{\s}x+\frac{\ln \n_0}{\ln \left ((\n-1)/\n_1 \right)},   \]
and let 
\[ B:=5e^{-\alpha_0/2}+2\sum_{i=1}^{K} \left ( 2e^{-\alpha_i/2}+ e^{-2^{i/2} xm}\right ) +e^{-x \n_0/\sigma}. \]
So that by \eqref{1406}, and \eqref{aaaaa}+\eqref{bbbbb}+\eqref{ccccc}+\eqref{ddddd},
\begin{equation} \proba(H(T)\geq A)\leq B, \label{1506.24}\end{equation}
and it only remains to upper bound $A$ and $B$.

Toward upper bounding $B$, note that $x\mapsto \E[\tilde \mu[0,x]]$ is increasing, so $x\mapsto \X_x$ is increasing, and thus for every $0\leq i \leq K-1$, 
\[ \alpha_i=2^i m\X_{2^{i+1}m} \geq 2^i (m\X_{2m}) = 2^i \alpha_0. \]
Then by definition of $m$, $xm=\X_{2m} m =\alpha_0$, and by assumption, $\alpha_0=x\E[\tilde \mu[0,x]]/2\geq 100$. Hence, by standard comparisons with geometric sums,
\begin{equation*} \sum_{i=1}^{K} \left ( 2e^{-\alpha_i/2}+ e^{-2^{i/2} xm}\right ) \leq \sum_{i=1}^{K}\frac{1}{2^i} \left ( 2e^{-\alpha_0/2}+ e^{-\alpha_0}\right ) \leq 40 e^{-\alpha_0}.  \end{equation*} 
Furthermore note that, $\alpha_0=x\E[\tilde \mu[0,x]]/2\leq x\E[\tilde \mu[0,\infty]]/2=xN/(2\sigma)$. So $e^{-xN/\sigma}\leq e^{-\alpha_0}$ and
\begin{equation} B= 5e^{-\alpha_0/2}+2\sum_{i=1}^{K} \left ( 2e^{-\alpha_i/2}+ e^{-2^{i/2} xm}\right ) +e^{-xN/\sigma} \leq 100 e^{-\alpha_0/2}. \label{BBBBB} \end{equation}

We now upper bound $A$. First, some straightforward inequalities using $x\geq 10$, $\n\leq \s$, $\n\geq \n_1+2$, $2^{K+1} m> (\n_0-1)/(32\sigma)$ and $\n\geq 10$ gives: 
\begin{equation} A\leq 800\frac{\n}{\s} \left (x+\frac{\s}{\n}\frac{\ln \n_0}{\ln \left (\n/\n_1 \right)}+\frac{\s}{\n_0}\ln(\n_{\geq 2})\right )+ 8\frac{\n}{\sigma}\sum_{i=1}^{K}\frac{\ln(3\alpha_i)}{2^i m}. \label{AAAA} \end{equation}
Then note that for every $l\geq 0$, $\psi(l)\leq l\sum_{i=1}^\n (d_i-1)/\sigma\leq l \n_0/\sigma$. Thus, 
\begin{equation} \int_1^\sigma \frac{dl}{\psi(l)}\geq \log(\sigma) \frac{\sigma}{\n_0} \geq \frac{\s}{\n_0}\ln(\n_{\geq 2}). \label{ShitA} \end{equation}
Similarly, using for $x\geq 0$, $1/\ln(1+x)\leq 2+2/x$, and $\n\geq \n_1+\n_0$, we have
\begin{equation} \frac{\sigma}{\n}\frac{\ln \n_0}{\ln(\n/\n_1)}\leq \frac{\sigma}{\n}\ln(n_0)\left (2+\frac{\n_1}{\n_0} \right ) \leq 3 \log(\sigma) \frac{\sigma}{\n_0} \leq 3 \int_1^\sigma \frac{dl}{\psi(l)}. \label{ShitB} \end{equation}

Thus, it remains to upper bound $\sum_{i=1}^{K} \frac{\ln(3\alpha_i)}{2^i m}$. To this end, we may assume that $K\geq 1$ since otherwise the sum is null. In that case, $2^Km\leq (\n_0-1)/(32\sigma)$. Moreover, we have by Lemma \ref{measuredeath} (b) and since for every $1\leq i \leq \n$, $\alpha_i\geq 50$,  \begin{equation} \sum_{i=0}^{K-1}\frac{\ln(3\alpha_i)}{2^i m}  \leq 2 \sum_{i=1}^{K}\frac{\ln(\alpha_i)}{2^i m}=2 \sum_{i=1}^{K} \frac{\ln(\X_{2^{i+1}m} 2^{i}m)}{2^i m} 
\leq 4 \sum_{i=1}^{K} \frac{\ln(\X_{2^{i+1}m})}{2^i m}. \label{1506.14}
\end{equation}
Next, we compare the last sum with $\int dl/\psi(l)$. By Lemma \ref{psimu}, and standard integral calculus,
\begin{align*} \int_{\X_{2m}}^{\X_{m2^{K+1}}} \frac{dl}{\psi(l)}  =\sum_{i=1}^{K} \int_{\X_{m2^i}}^{\X_{m2^{i+1}}} \frac{dl}{l\E[\tilde \mu[0,l]]} \geq \sum_{i=1}^{K} \int_{\X_{m2^i}}^{\X_{m2^{i+1}}} \frac{dl}{l m2^{i+1}}
 \geq \sum_{i=1}^{K} \frac{\ln(\X_{2^{i+1}m})}{2^{i+2} m}-\frac{\ln(\X_{2m})}{2m}.
 \end{align*}
We then recall that $x=\X_{2m}\geq 10$ and $m\geq 1/6$ (by Lemma \ref{measuredeath} (c)). Also $2^{K+1} m\leq (\n_0-1)/(16\sigma)$ by definition of $K$ so by Lemma \ref{measuredeath} (a), $\X_{2^m 2^{K+1}}\leq \sigma/8\leq \sigma$. Thus, 
\begin{equation} \int_{x}^\sigma \frac{dl}{\psi(l)}+ x\geq \sum_{i=1}^{K} \frac{\ln(\X_{2^{i+1}m})}{2^{i+2} m}. \label{ShitC} \end{equation}
Therefore, plugging \eqref{ShitA}, \eqref{ShitB}, \eqref{ShitC} in \eqref{AAAA} gives for some constant fixed constant $C>0$, 
\begin{equation} A\leq Cx+C\int_{1}^\sigma \frac{dl}{\psi(l)}. \label{AAAAAA} \end{equation} 
To sum up, \eqref{AAAAAA}, \eqref{BBBBB} and \eqref{1506.24} yield together the desired inequality.
\end{proof}
\begin{remark} It is possible to get a slightly better bound, up to some $\log$, if one does not do the steps \eqref{ShitA}, \eqref{ShitB}, and does not use the bound $\X_{2^m 2^{K+1}}\leq \sigma$ for \eqref{ShitC}. However, the resulting bound is so technical it does not seem worth the effort. If one wants to go even beyond that, then one needs to improve Lemma \ref{2step3}, which already only contributes  $\log(\n_{\geq 2})(\sigma/n_0)\leq \log(\n_{\geq 2})$, or find exact constants which is impossible with the methods of this paper.
\end{remark}
 \subsection{GHP convergence of $\D$-trees: Proof of Theorem \ref{D_GHP_T}}
We work under the setting of Theorem \ref{D_GHP_T}: $\Dn \ply \Theta$, $\M^\Dn \to 0$, and Assumption \ref{Hypo3} holds. By Theorem \ref{D_GP_T}, the next convergence holds weakly for the GP topology:
\begin{equation} \left ( T^n,(\s^n/n) \d^n,\M^n \right) \limit (\T^\Theta,\d^\Theta,\p^\Theta). \label{2209c}  \end{equation}
We need to prove that it also also holds for the  GHP topology. To do so, by Lemma \ref{GP+GH=GHP}, and since $\p^\Theta$ have a.s. support $T^\Theta$ (see \cite{ICRT1} Theorem 3.1), it suffices to show that \eqref{2209c} holds for the Gromov--Hausdorff (GH) topology (see Section \ref{GH}). To this end, the main idea is to show that (i) the first branches of $\Dn$-trees converge for the GH topology, and that (ii) $\Dn$-trees are close from their first branches. Toward (i), it is straightforward to check from Proposition \ref{pro:First_branches_D} that  for every $k\in \N$, 
 \begin{equation}\left ( T_{Y_k^n}^\n,(\s^n/n) d^n,\M^n \right) \limit^{\WGH} \left (\T_{Y_k^\Theta}^\Theta,d^\Theta,\p^\Theta \right ). \label{2209e}
 \end{equation}
Indeed, one can construct both $\D$-trees and ICRT in $(\R^+)^{\N}$ using a dimension for each branches (see Aldous \cite{Aldous1}), and since by Proposition \ref{pro:First_branches_D} the cuts and glue points converge, it directly follows that the subtrees obtained from the first branches converge. We omit the straightforward details.
 
Toward (ii), we adapt the proof of Theorem \ref{thm:D_Height} to prove the following tightness result:
\begin{lemma} \label{GHGHGH} For every $\e>0$,
\[ \lim_{k \to \infty}  \limsup_{n\to +\infty} \proba \left (\frac{\s^n}{n}d_H \left (T^n_{Y_k^n},T^n \right )>\e  \right )=0. \]
\end{lemma}
Before proving Lemma \ref{GHGHGH}, let us explain why it implies together with \eqref{2209e}, Theorem \ref{D_GHP_T}.
\begin{proof}[Proof of Theorem \ref{D_GHP_T}.] Fix $\e>0$. Let $k\in \N$. By the Skorokhod's representation theorem, we may assume that the convergence  \eqref{2209e} holds a.s. for the GH topology. 

Then, by Lemma \ref{GHGHGH}, if $k\in \N$ is large enough, we have for every $n\in \N$ large enough,
\[ \proba \left (\frac{\s^n}{n} d_H \left (T^n_{Y_k^n},T^n \right )>\e  \right )\leq \e. \]
Also, since by Lemma \ref{prelimGH} (c) the $\Theta$-ICRT is a.s. compact, if $k\in \N$ is large enough,
\[ \proba \left (d_H \left (\T^\Theta_{Y_k^\Theta },\T^\Theta \right )>\e  \right )\leq \e. \]
Thus, given the a.s. GH convergence in \eqref{2209e}, if $k\in \N$ is large enough, we have for every $n\in \N$ large enough,
\[ \proba \left (d_{\GH} \left (\left ( T^n,(\s^n/n) \d^n,\M^n \right),\left (\T^\Theta,\d^\Theta,\p^\Theta \right ) \right )>2\e  \right )\leq 2\e. \]
Finally, since $\e>0$ is arbitrary, Theorem \ref{D_GHP_T} follows.
\end{proof}
Our goal for the rest of the section is to prove Lemma \ref{GHGHGH}. We keep the same notations as in the proof of Theorem \ref{D_GHP_T}. 
By adapting the proof of Theorem \ref{thm:D_Height} we get the following result.
\begin{lemma} \label{Tight1} There exists $C>0$ such that for every $\D\in \OmegaD$, $10\leq x\leq \sqrt{\sigma}/8$, and $t>0$, if $m=\E[\tilde \mu[0,x]]/2$,
\[ \proba\left (d_H(T_{Y_{\xi_m}},T)>C+C\frac{\n}{\s} \left (t+\int_{x}^{\sigma} \frac{dl}{\psi(l)} \right ) \right ) \leq Ce^{-x\E[\tilde \mu[0,x]]/C}+ C e^{-t\E[\tilde \mu[0,x]]/C}. \]
\end{lemma}
\begin{proof} We keep the same notations as in the proof of Theorem \ref{D_GHP_T}. First note that 
\[ d_H(T_{Y_{\xi_m}},T)=\Delta(\xi_m,\infty)\leq \sum_{i=1}^{K} \Delta(\xi_{2^{i-1}m},\xi_{2^i m}) +\Delta(\xi_{2^Km},2^\geq)+\Delta(2^\geq, \infty),  \]
Then by Lemmas, \ref{2step} (b), \ref{2step3} (b), \ref{2step4} respectively, we have for every $0< i \leq K$,
\begin{equation*} \proba \left (\Delta(\xi_{2^{i-1}m},\xi_{2^i m})>8\frac{\n}{\sigma}\left (\frac{\ln(3\alpha_i)}{2^i m}  +\frac{t}{2^{i/2}} \right )\right )\leq 2e^{-\alpha_i/2}+ e^{-2^{i/2} t m}. \label{bbbbbe} \end{equation*}
\begin{equation*} \proba \left (\Delta(\xi_{2^Km},2^\geq)> 8 \frac{\n}{\sigma}\left (\frac{\ln(\n_{\geq 2})}{2^K m}  +\frac{t}{2^{K/2}} \right ) \right ) \leq 2e^{-\alpha_K/2}+e^{-2^{K/2}  t m}.\label{ccccce} \end{equation*}
\begin{equation*} \proba\left ( \Delta(2^\geq, \infty) >3+\frac{\n}{\s}t+\frac{\ln \n_0}{\ln \left ((\n-1)/\n_1 \right)}  \right ) \leq e^{-t N/\sigma}. \label{ddddde} \end{equation*}
So we can sum the upper bounds and conclude as for Theorem \ref{thm:D_Height}. We omit the details.
\end{proof}
We then deduce Lemma \ref{GHGHGH} from Lemma \ref{Tight1}.
\begin{proof}[Proof of Lemma \ref{GHGHGH}:] Fix $x>0$ large enough. Let $t>0$. Let $m^n:=\E[\tilde \mu^n[0,x]]/2$. By Lemma  \ref{Tight1}, for every $n$ large enough,
\[\proba\left (d_H(T^n_{Y_{\xi_{m^n}}},T^n)>C+C\frac{n}{\s^n} \left (t+\int_{x}^{\sigma^n} \frac{dl}{\psi^n(l)} \right ) \right ) \leq  C e^{-t\E[\tilde \mu^n[0,x]]/C}+Ce^{-x\E[\tilde \mu[0,x]]/C}. \label{1606.11} \]
So by using Lemma \ref{cut}, and Lemma \ref{measuredeath} (b) to bound $\xi_{m^n}$, for every $x,n$ large enough, 
\[\proba\left (d_H(T^n_{Y_{\lceil 3x^2\rceil }},T^n)>C+C\frac{n}{\s^n} \left (t+\int_{x}^{\sigma^n} \frac{dl}{\psi^n(l)} \right ) \right ) \leq   C e^{-t\E[\tilde \mu^n[0,x]]/C}+(C+1)e^{-x\E[\tilde \mu^n[0,x]]/C}. \]
Now since for every $l>0$ $\E[\mu^n[0,l]]\to \E[\mu^\Theta[0,l]]$ by Lemma \ref{prelimGH} (a), and since $\mu^\Theta[0,\infty]=\infty$, the right hand-side converges toward to $0$ as $n\to \infty$, $x\to \infty$. We may furthermore use Assumption \eqref{D_GHP_T} to get $\int_{x}^{\sigma^n} \frac{dl}{\psi^n(l)}\to 0$ as $n,x\to \infty$. So we get by taking $n\to \infty$, and $x\to \infty$ slowly enough, 
\[ \lim_{k\to \infty} \limsup_{n\to \infty} \proba\left (d_H(T^n_{Y_k},T^n)>2C \frac{n}{\s^n} t \right )=0. \]
Since $t>0$ is arbitrary, this concludes the proof. 
\end{proof}

\section{Degenerate limit toward $\P$-trees} \label{sec:Foutoire} \label{sec:Ptree}
In this section we describe the case where $d_i^n/n \nrightarrow 0$. More precisely, instead of assuming $\Dn\ply \Theta$, we assume that for a sequence $\P=(p_i)_{i\in \N\cup\{\infty\}}$ such that
\begin{equation} p_1>0 \quad ; \quad p_1\geq p_2\geq\dots \quad ; \quad \sum_{i\in \N\cup\{\infty\}} p_i=1, \label{eq:Pset} \end{equation}
\begin{Hypo}[$\Dn \ply \P$] \label{Hypo1} For all $i\geq 1$, $d_i^n/n\to p^\P_i$. 
\end{Hypo}
In that case we observe a step by step convergence of Foata--Fuchs construction (Algorithm \ref{D-tree}), which motivates the following extension of Aldous--Camarri--Pitman \cite{IntroICRT2} $\P$-tree model.

 Let $\OmegaP$ be the set of sequence $(p_i)_{i\in \N\cup \{\infty\}}$ in $\R^+$ with \eqref{eq:Pset}. Let $V_\infty$, $(V_{\infty,i})_{i\in \N}$, and $(L_i)_{i\in \N}$ be different vertices from $(V_i)_{i\in \N}$. For every $\P\in \OmegaP$, the $\P$-tree is the tree constructed as follows:
\begin{algorithm} \label{P-tree} \emph{Definition of the $\P$-tree for $\P\in \OmegaP$.}
\begin{compactitem}
\item[-] Let $(A^\P_i)_{i\in \N}$ be a family of i.i.d. random variables such that for all $i\in \N$, $\proba(A^\P_1=V_i)=p_i$.
\item[-] For every $i\in \N$, let $B^\P_i=A_i$ if $A_i\in \N$, and let $B^\P_i=V_{\infty,i}$ otherwise.
\item[-] 
Let $T^\P_1:=(\{B_1\},\emptyset)$ then for every $i\geq 2$ let
\begin{equation*} T^\P_i:=\begin{cases} T_{i-1}\cup \{B_{i-1},B_{i}\} & \text{if } B_{i}\notin T_{i-1}. \\T_{i-1} \cup \{A_{i-1},L_{\inf\{k, L_k\notin T_{i-1}\}}\}& \text{if } B_{i} \in T_{i-1}. 
\end{cases} \label{1002} \end{equation*}
\item[-] Let $T^\P$ denote the rooted tree $(\bigcup_{n\in \N}T_n,B_1)$.
\end{compactitem}
\end{algorithm}
\begin{remark} The introduction of $V_{\infty}$ allows us to consider a slightly more general definition than the one introduced in \cite{IntroICRT2} which requires $p_{\infty}=0$. 
\end{remark}
Again, we may define $X_i^\P:=\inf\{j\in \N, B_j=V_i\}$, let $Y^\P_1, Y^\P_2\dots, $ be the repetitions in $B_1,\dots, $ the for every $i\in \N$, let $Z_i:=\inf\{j\in \N, B_j= V_{Y_i}\}$. Also let $\n^\P:=\max\{i\in \N\cup\{\infty\}, p_i>0\}$. And let $\sigma^\P :=\sum_{i\in \N} p_i^2$. We now prove convergence of the first branches: 
\begin{proposition} \label{pro:First_branches_DP}Assume that $\Dn \ply \P$ then we have the following weak joint convergence:
\[ \TSB^\D=(X^n_i,Y^n_i,Z^n_i)_{i\in \N}\to\TSB^\P:= (X^\P_i,Y^\P_i,Z^\P_i)_{i\in \N}\]
\end{proposition}
\begin{remark} Since, the cuts and glue points describes the distances in the trees, the above convergence implies the joint convergence of the distances between the $V_i,V_j$, between the leaves$\dots$
\end{remark}
\begin{proof} Consider a topology on $\{V_i\}_{i\in \N\cup\{\infty\}}$ such that as $i\to \infty$, $V_i\to V_\infty$. It directly follows from $\D^n\ply \P$ that $(A_i^n)_{1\leq i\leq n-1}$ converges weakly toward $(A_i^\P)_{i\in \N}$. By Skorokhod's representation theorem we may assume this convergence holds almost surely. It directly follows that $X_i^n \to X^\P_i$ for every $i\in\N$. Also each index of repetition for  $(B_i^\P)_{i\in \N}$ also corresponds to a repetition for $(A_i^n)_{1\leq i \leq n-1}$ with $n$ is large enough. Thus it suffices to show that there is no extra repetitions in  $(A_i^n)_{1\leq i \leq n-1}$. More precisely it is enough to prove that for every $a\in \N$ 
\begin{equation} \limsup_{k\in \infty} \lim_{n\to \infty} \proba(\exists 1\leq i < j \leq a, A^n_i=A^n_j\notin \{V_1,\dots, V_k\})=0. \label{13/02/13h} \end{equation}
By an union bound this last probability is bounded by 
\[ \sum_{1\leq i<j\leq a} \sum_{x>k} \proba(A^n_i=A^n_j=V_x)\leq a^2 \proba (A^n_1=A^n_2=V_b)= \sum_{x>k} \frac{d^n_x(d^n_x-1)}{(\n-1)(\n-2)}, \]
then using for every $x>k$, $d^n_x\leq (d^n_1+d^n_2+\dots d^n_k)/k \leq (\n-1)/k$, the last term is bounded by,
\[ \frac{1}{k} \sum_{x>k} \frac{d^n_x-1}{\n-2} \leq \frac{1+o(1)}{k} .\]
Thus, \eqref{13/02/13h} holds which thus gives the convergence of the $Y_i^n$ and so of the $Z_i^n$. 
\end{proof}

\section{Extensions of the main results} \label{sec:randomD} \label{sec:extension}
\subsection{$\P$-trees and ICRT}
We state  here results analog to Theorems \ref{D_GP_T}, \ref{D_GHP_T}, \ref{thm:D_Height}. The proofs are similar up to two main differences, first we used some equality in distributions for $\D$-trees which are directly extended to $\P$-trees and ICRT by taking the limit of $\D$-tress. (The law of the distance between the leaves $L_i, L_j$ does not depends on $i\neq j$, and several similar claims.) Also to prove the convergence of the first branches of $\P$-trees toward ICRT one need to use the following important analog of Proposition \ref{timechange} due to Aldous, Camarri, and Pitman \cite[Section 4.1]{IntroICRT2} (a bit reformulated here for our purpose).
\begin{lemma} \label{Transform} Let $\P\in \OmegaP$, let $\Theta=(0,p_1/\sigma, p_2/\sigma,\dots)$. Note that $\Theta\in \OmegaT$. Let $\{E^\P_i\}_{i\in \N}$ be a family of i.i.d. exponential random variables of mean $\sigma$. Let $f^\P:\R^+\mapsto \R^+$ increasing be such that for every $i\geq 0$, $f^\P(i)=\sum_{k=1}^i E_k^\P$. Then $f^\P(\TSB^\P)$ and $\TSB^\Theta$ have the same distribution.
\end{lemma} 
\begin{remark} We may morally interpret the above lemma as follows: If one gives exponential length to each edges of a $\P$-tree then one recover an ICRT with $\mu^\Theta<\infty$. 
\end{remark}
We won't detail more on the proofs, and now state the analogs of our main results: For $\P\in \OmegaP$, let $\V^\P:=\{i\in \N, p_i>0\}$, and let $\d^\P$ be the graph distance on $T^\P$. Consider $(\Pn)_{n\in \N}\in \OmegaP^\N$. For more convenient notations, we use superscript $n$ instead of $\Pn$. For every $n\in \N$ consider a random probability measure $\p^n$ on $\{1,\dots, \n^\P\}$ such that $\p^n\to 0$ uniformly.  We assume
\begin{Hypo}[$\Pn \ply \Theta$]\label{Hypo2P} $p_1\to 0$. And for all $i\geq 1$, $p_1^n/\sigma^n\to \theta_i$.
\end{Hypo} 
\begin{theorem} \label{P_GP_T} If $\Pn\ply \Theta$, $\p^n\to 0$ uniformly, and \eqref{eq:ThetaSet} holds, then the following convergence holds for the weak Gromov--Prokhorov (GP) topology (see Appendix \ref{GPdef} for definition):
\[ \left ( \V^n,\s^n \d^n,\M^n \right) \limit^{\WGP} (\T^\Theta,d^\Theta,\p^\Theta).  \]
\end{theorem}
This result was already proved by Aldous, Camarri, Pitman \cite{IntroICRT1,IntroICRT2} in the natural case $p_\infty=0$. We then state our result for the GHP topology. For every $l\in \R^+$, $\P=(p_i)_{i\in\N\cup\{\infty\}} \in \OmegaP$ let 
\[ \psi^\P(l):=l\sum_{i\in \N} \frac{p_i}{\sigma^\P}(1-e^{-p_il/\sigma^\P}). \]
 As for $\D$-trees, we assume:
\begin{Hypo} \label{Hypo3} \label{P_Tight_GHP}
\[ \lim_{y\to +\infty} \limsup_{n\to +\infty} \int_{y}^{\sigma^n\n^{n}} \frac{dl}{\psi^{n}(l)} =0. \]
\end{Hypo} 
\begin{theorem} \label{P_GHP_T} Under the same setting as Theorem \ref{P_GP_T}, if furthermore Assumption \ref{P_Tight_GHP} holds, then the following convergence holds for the weak Gromov--Hausdorff--Prokhorov (GHP) topology
\[ \left ( \V^n,\s^n d^n,\M^n \right) \limit^{\WGP} (\T^\Theta,d^\Theta,\p^\Theta).  \]
\end{theorem}
We finish with similar bounds on the height of $\P$-trees and ICRT.
\begin{theorem} \label{thm:P_Height} There exists $c,C>0$ such that for every $\P\in \OmegaP$ and $x\in \R^+$: 
\[ \proba\left (c\s^{\P}H(T^{\P})>x+\int_{1}^{\sigma^\P \n^\P} \frac{dl}{\psi^\P(l)}\right)\leq C e^{-c\psi^\P(x)}. \]
\end{theorem}
\begin{theorem} \label{thm:ICRT_Height} There exists $c,C>0$ such that for every $\Theta\in \OmegaT$ and $x\in \R^+$:
\[ \proba\left (cH(\T^{\Theta})>x+\int_1^{+\infty} \frac{dl}{\psi^\Theta(l)}\right)\leq Ce^{-c\psi^\Theta(x)}. \]
\end{theorem}
\subsection{Random degree sequences} \label{subsec:RandomDegree}
The aim of this section is twofold: explain how our main results on $\D$-trees can be used to study trees with random degree sequence, then prove that L\'evy trees are ICRT with random parameters. 

Beforehand, we may naturally extend Assumptions \ref{Hypo2} and \ref{Hypo1} and our main results to the case where we do not have necessarily $\n^\Dn=n$ by assuming instead $\n^\Dn\to \infty$, and replacing where needed $n$ by $\n^\Dn$. We still conveniently use the notations $\Dn \ply\P$ and $\Dn\ply\Theta$ in those cases.

Then, we define a topology on the degree sequences $\Omega:=\OmegaD\cup \OmegaP\cup \OmegaT$. 
\begin{compactitem}
\item For every $\D\in \OmegaD$ and $i\in \N$, let $\theta_i^\D:=d_i^\D/\sigma^\D$ and let $\lambda^\D:=\n^\D/\sigma^\D$. 
\item For every $\P\in \OmegaP$ and $i\in \N$, let $\theta_i^\P:=p_i^\P/\sigma^\P$, $\n_0^\P:=\infty$ and let $\lambda^\P:=\lambda^\P$. 
\item For every $\Theta\in \OmegaT$, let $\n_0^\Theta:=\infty$ and let $\lambda^\Theta:=\infty$. 
\end{compactitem} For every sequence $\{\Lambda_n\}_{n\in \N}$ in $\Omega$ and $\Lambda\in \Omega$, we say that $\Lambda_n\to^\Omega \Lambda$ if and only if 
\[ \n_0^{\Lambda_n}\to \n_0^\Lambda \quad ; \lambda^{\Lambda_n}\to  \lambda^{\Lambda} \quad ; \quad \forall i\in \N, \, \theta_i^{\Lambda_n}\to  \theta_i^{\Lambda}. \]
\begin{lemma} \label{puretopo} For every $(\Dn)\in \OmegaD^\N$, $(\Pn)\in \OmegaP^\N$, $(\Theta_n)\in \OmegaT^\N$, $\P\in \OmegaP$, $\Theta\in \OmegaT$, we have: (a)
$\Dn\ply \P \iff \Dn\to^\Omega \P,\, \, $ (b) $ \P_n\ply \Theta \iff \P_n\to^\Omega \Theta,\, \,$ and (c)  $\OmegaD$ is dense on $\Omega$.
\end{lemma}
\begin{proof} Toward (a), if $\Dn\to^\Omega \P$ then for every $i\in \N$, $d_i^\Dn/\n^\Dn= \theta_i^\Dn /\lambda^\Dn \to \theta_i^\P/\lambda^\P=p_i$. Also, since $\n_0^\Dn\to \infty$, we have $\n^\Dn\to \infty$. Hence, $Dn\ply \P$.

On the other hand, if $\Dn\ply \P$, then it directly follows from Fubini's Theorem that 
\[ (\lambda^\Dn)^{-2}=(\sigma^\Dn/\n^\Dn)^2= \sum_{i=1}^\infty \frac{(d^\Dn_i)(d^\Dn_i-1)}{(\n^\Dn)^2}\limit \sum_{i=1}^\infty p_i^2=(\sigma^\P)^2=(\lambda^\P)^{-2}. \]
It then follows that for every $i\in \N$, 
\[ \theta_i^\Dn=(d_i^\Dn/\n^\Dn)\lambda^\Dn \limit (p_i^\P)\lambda^\P= \theta_i^\P. \]
Also since $\n^\Dn\to \infty$, and $p_1^\P>0$, we have $d_1^\Dn\to \infty$ and so $\n_0^\Dn\to \infty$. So $\Dn\to^\Omega \P$. The proof of (b) is similar. We omit the details. Toward (c). Let us show that $\OmegaP$ is included in $\bar \Omega_D$, the adherence of $\OmegaD$.  Fix $\P\in \OmegaP$. Let $(\Dn)_{n\in \N}\in \OmegaD^\N$ such that $\n^\Dn\sim n$ and such that for every $i\in \N$ and $n\in \N$ large enough, $d_i=\lfloor p_i n\rfloor$. Note that $\Dn\ply \P$. Hence, since $\P$ is arbitrary $\OmegaP\subset \bar \Omega_D$. Similarly, $\OmegaT\subset \bar \Omega_D$. 
\end{proof}

Next, to consider Gromov--Prokhorov convergence, we define the set of couple of degree sequences and measure $\Omega_{\Lambda,\M}:=\Omega_{\D,\M}\cup \Omega_{\P,\M}\cup \Omega_{\Theta,\M}$ where
\begin{compactitem}
\item $\Omega_{\D,\M}$ is the set of $(\D,\M)$ such that $\D\in \OmegaD$ and $\M$ is a probability measure on $\{1,\dots, \n^\D\}$.
\item  $\Omega_{\P,\M}$ is the set of $(\P,\M)$ such that $\P\in \OmegaP$ and $\M$ is a probability measure on $\{1,\dots, \n^\P\}$.
\item $\Omega_{\Theta,\M}$ is the set of couples $(\Theta,0)$ such that $\Theta\in \OmegaT$ and $\mu^\Theta[0,\infty]=\infty$. 
\end{compactitem}
Let $\Omega_\p$ be the set of probability measure on $\{V_i\}_{i\in \N}$, and note that $\Omega_{\Lambda,\M}\subset \Omega\times \Omega_\M$. So we may equip $\Omega_{\Lambda,\M}$ with the product topology, where we consider on $\Omega_\p$ the uniform convergence. Then,
\begin{compactitem} 
\item For every $(\D,\M) \in \Omega_{\D,\M}$,  let $\proba^{\D,\M}_\GP$ be the distribution of $T^{\D,\p}_\star:=( \V^\D,(\s^\D/\n^\D) d^\D,\M)$. 
\item For every $(\P,\M) \in \Omega_{\P,\M}$,  let $\proba^{\P,\M}_\GP$ be the distribution of $T^{\P,\p}_\star:= ( \{B^\P_i\}_{i\in \N},\s^\P d^\P,\M)$.
\item
 For every $(\Theta,\M)\in \Omega_{\Theta,\M}$, let $\proba^{\Theta,\M}_\GP$ denotes the distribution of $T^\Theta_\star:=(\T^\Theta,d^\Theta,\p^\Theta)$. 
\end{compactitem}
We have the following technical results:
\begin{lemma}\label{THMGP}The map $(\Lambda,\M) \to \proba^{\Lambda,\M}_\GP$ is continuous for the weak GP topology.
\end{lemma}
\begin{remark} Note that the above lemma provides an alternatif proof for Theorem \ref{P_GP_T}.
\end{remark}
\begin{proof} First, since $\OmegaD$ is dense on $\Omega$, it is easy to check that $\Omega_{\D,\M}$ is dense on $\Omega_{\Lambda,\M}$. Thus, since the GP topology is polish, the desired result follows from the GP convergence of $\Dn$-trees toward $\P$-trees (Proposition \ref{pro:First_branches_DP} after reformulation) and of $\Dn$-trees toward ICRT (Theorem \ref{D_GP_T}).
\end{proof} 

\begin{lemma} \label{PolishBS} The spaces $\Omega_\Lambda$ and $\Omega_{\Omega,\p}$ are Polish (separable, complete, metrisable).
\end{lemma}
\begin{proof} It is well known that $\Omega_\p$ is Polish. And it is easy to check from Lemma \ref{puretopo}  (c) that $\Omega_\Lambda$ is also Polish. We omit the details.
\end{proof}

It directly follows from the last two results that: 
\begin{proposition} \label{mix} Let $\Omega^\proba$ be the set of random variables on $(\Omega,\to^\Omega)$, and let $\Omega_{\Lambda,\M}^{\proba}$ be the set of random variables on $\Omega_{\Gamma,\M}$. 
If  $(X^n,\M^n)_{n\in \N}$ is a sequence in $(\Omega_{\Lambda,\M}^\proba)$ which converges weakly toward $(X,\M)\in \Omega^\proba_{\Lambda,\M}$ then $T^{X_n,\p_n}_\star$ converges weakly toward $T^{X,\p}_\star$ for the GP topology. 
\end{proposition} 
We finish this section by proving a conjecture of Aldous, Miermont, and Pitman's \cite{ExcICRT}: 
\begin{theorem} L\'evy trees, that are GP limits of Galton--Watson trees, are equal in distribution for the GP topology to ICRT with a random parameter $\Theta$. 
\end{theorem}
\begin{proof} For concision, we will not detail the definitions of Galton--Watson trees and L\'evy trees, and only recall the properties we need for the proof. Most importantly, Galton--Watson trees conditioned to their degree sequence are uniform. In other words the fact that the L\'evy tree $T_\star^L=(T^L,\d^L,\p^L)$ is a GP limit of Galton--Watson trees can be expressed as follows: there exists a sequence of random variables $(\D^n,\p^n)$ in $\Omega_{\D,\p}$ such that we have the following weak convergence for the Gromov--Prokhorov topology 
\[ T_\ast^{\D^n,\p^n} \limit T_\star^L. \] 

To show the desired result it is enough to show that there exists a subsequence $(m_n)_{n\in \N}$ and a random $(\Theta,\M)\in \Omega_{\Theta,\M}$ such that we also have the following  convergence for the Gromov--Prokhorov topology
\[ T_\ast^{\D^{m_n},\p^{m_n}} \limit T_\star^{\Theta,0}. \]
Indeed, the unicity of the limit would then imply $T_\star^L=^{(d)} T_\star^{\Theta,0}=(T^\Theta,\d^\Theta,\p^\Theta)$.
By continuity of $(\Lambda,\M) \to \proba^{\Lambda,\M}_\GP$ (see Lemma \ref{THMGP}), and since we work on Polish spaces (see Lemma \ref{PolishBS}) it suffices to show get the following weak convergence on $\Omega_{\Lambda,\p}$
\begin{equation} (\D^{m_n},\p^{m_n})\limit (\Theta,0). \label{eq:ExtractedCV} \end{equation}

Our argument is general, but requires some very basic properties of L\'evy trees, (see Duquesne, Le Gall \cite{Duquesne2,Duquesne1}) to show that we must have in probability 
\[ \text{(a)} \quad \n_{0}^{\Dn}\to \infty  \quad  \text{(b)} \quad \lambda^{\Dn}\to \infty \quad  \text{(c)} \quad  \max_{1\leq i\leq \n^{\Dn}}  \p^{n}(i)\to 0. \]
Toward (a), since L\'evy trees a.s. have an infinite number of branchpoints, $\n^{\Dn}_0\geq \n^{\Dn}_{\geq 2}\to \infty$ in probability. Toward (b), recall that $\lambda^{\Dn}$   is the factor by which we rescale the distances in the tree to obtain GP convergence. So the contrary would imply that with positive probability all the distances between pair of branchpoints in the L\'evy tree would be contained into some set of the form $\frac{1}{\lambda^L}\N$, which is a.s. not the case. (c) follows by GP convergence since L\'evy trees are non-atomic. 

Hence, to show \eqref{eq:ExtractedCV}, it remains to show that we may get the next weak convergence:
\begin{equation} (\theta_i^{\D^{m_n}})_{i\geq 1} \mapsto (\theta_i)_{i\geq 1}. \label{eq:cvDeg} \end{equation}
on the topological set $(\mathcal S,\mathcal F)$ of positive decreasing sequence $(x_i)_{i\in \geq 1}$ with $\sum x_i^2\leq 1$ equipped with the product topology. It is easy to check using standard diagonal extraction procedure that  $(\mathcal S,\mathcal F)$ is compact and Polish. Thus we may choose $(m_n)_{n\in \N}$ such that $(\theta_i^{\D^{m_n}})_{i\geq 1}$ convergences weakly, and then pick  $(\theta_i)_{i\geq 1}$ as it limits. \eqref{eq:ExtractedCV} follows, which concludes the proof.
\end{proof}
\begin{remark} In \cite{ExcICRT}, Aldous, Miermont, and Pitman further conjectured that the parameters of the ICRT may be expressed in terms of the sizes of the jumps of the L\'evy processes. We believe one may compute those parameters by replacing the last step of the above proof. Indeed, instead of using a compactness argument to show \eqref{eq:cvDeg}, one may instead use the Skorokhod convergence of the Lukasiewick walk of Galton--Watson trees to deduce this convergence. (Obtaining this convergence is the usual way one starts to show convergence of Galton--Watson trees \cite{Duquesne2,Duquesne1}.)
This only gives half of the convergence of the $(\theta_i^\Dn)$, since it only describes how large degrees compare to one another, and one also need to compute the Brownian part. In other words, one also need to compute the contribution of $\sigma^\Dn =\sum_{i=1}^{\n^n} d_i^n (d_i^n-1)$ corresponding to the vertices of small degrees i.e.  to the $i\geq k_n$ where $(k_n)_{n\in \N}$ is a slowly diverging sequence. We believe that for suitable reproduction laws for the Galton--Watson trees, this can be done using standard concentration tools. The author will not pursue this direction further. 
\end{remark} 




\appendix
\section{Topological notions of convergences} \label{sec:Topology}
\subsection{Gromov--Prokhorov (GP) topology} \label{GPdef}
A measured metric space is a triple $(X,d,\mu)$ such that $(X,d)$ is a Polish space and $\mu$ is a Borel probability measure on $X$. Two such spaces $(X,d,\mu)$, $(X',d',\mu')$ are called isometry-equivalent if  there exists an isometry $f:X\to X'$ such that if $f_\star \mu$ is the image of $\mu$ by $f$ then $f_\star \mu=\mu'$. Let $\mathbb{K}_{\GP}$ be the set of isometry-equivalent classes of measured metric space. Given a measured metric space $(X,d,\mu)$, we write $[X,d,\mu]$ for the isometry-equivalence class of $(X,d,\mu)$ and frequently use the notation $X$ for either $(X,d,\mu)$ or $[X,d,\mu]$.

We now recall the definition of the Prokhorov's distance. Consider a metric space $(X,d)$. For every $A\subset X$ and $\e>0$ let $A^\e:= \{x\in X, d(x,A)<\e\}$. Then given two (Borel) probability measures $\mu$, $\nu$ on $X$, the Prokhorov distance between $\mu$ and $\nu$ is defined by 
\[d_P(\mu, \nu):= \inf\{\text{ $\e>0$: $\mu\{A\}\leq \nu \{A^\e\}$ and $\nu\{A\}\leq  \mu\{A^\e\}$, for all Borel set $A\subset X$} \}.\]

The  Gromov--Prokhorov (GP) distance is an extension of the Prokhorov's distance: For every $(X,d,\mu),(X',d',\mu')\in \K_{\GP}$ the Gromov--Prokhorov distance between $X$ and $X'$ is defined by
\[ d_{\GP}((X,d,\mu),(X',d',\mu')):=\inf_{S,\phi,\phi'} d_P(\phi_\star \mu, \phi'_\star\mu'),\]
where the infimum is taken over all metric spaces $S$ and isometric embeddings $\phi :X\to S$, $\phi' :X'\to S$. $d_{\GP}$ is indeed a distance on $\K_{\GP}$ and $(\K_{\GP},d_{\GP})$ is a Polish space (see e.g. \cite{GHP}).

We use another convenient characterization of the GP topology which relies on convergence of distance matrices: For every measured metric space  $(X,d^X,\mu^X)$ let $(x_i^X)_{i\in \N}$ be a sequence of i.i.d. random variables of common distribution $\mu^X$ and let $M^X:=(d^X(x_i^X,x_j^X))_{(i,j)\in \N^2}$. We have the following result from \cite{EquivGP},

\begin{lemma} \label{equivGP} Let $(X^n)_{n\in \N} \in \K_{\GP}^\N$ and let $X\in \K_{\GP}$ then $X^n\limit^{\GP}X$ as $n\to \infty$ if and only if $M^{X^n}$ converges in distribution toward $M^X$.
\end{lemma}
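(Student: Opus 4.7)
The plan is to prove the two implications separately: the forward direction is essentially a coupling argument built on top of Strassen's theorem, while the reverse direction relies on Gromov's reconstruction principle.

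For the forward direction, I would start from the definition of GP convergence and produce $\e_n\to 0$, ambient metric spaces $S_n$, and isometric embeddings $\phi_n : X^n \hookrightarrow S_n$ and $\phi_n' : X \hookrightarrow S_n$ with $d_P(\phi_{n\star}\mu^{X^n},\phi'_{n\star}\mu^X)\leq \e_n$. Strassen's theorem then furnishes, on an auxiliary probability space, i.i.d.\ couplings $((U_i^n,V_i^n))_{i\in\N}$ with $U_i^n\sim \phi_{n\star}\mu^{X^n}$, $V_i^n\sim \phi'_{n\star}\mu^X$, and $\proba(d_{S_n}(U_i^n,V_i^n)>\e_n)\leq \e_n$. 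Pulling back to $x_i^n:=\phi_n^{-1}(U_i^n)$ and $x_i:=(\phi_n')^{-1}(V_i^n)$, the triangle inequality gives
\[ \bigl|d^{X^n}(x_i^n,x_j^n) - d^X(x_i,x_j)\bigr| \leq d_{S_n}(U_i^n,V_i^n) + d_{S_n}(U_j^n,V_j^n), \]
which tends to $0$ in probability for every fixed $(i,j)$, so under this coupling $M^{X^n}\to M^X$ in probability, and therefore the laws converge.

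For the reverse direction, the key ingredient is Gromov's reconstruction theorem: the law of $M^X$ determines $[X,d^X,\mu^X]$ uniquely (since the closure of the i.i.d.\ sample is $\supp(\mu^X)$ a.s., and the empirical distribution of $(x_i^X)_{i\leq k}$ converges weakly to $\mu^X$ as $k\to\infty$). Combined with the forward direction, this shows that $[X,d,\mu]\mapsto \mathrm{Law}(M^X)$ is a continuous injection from $\K_{\GP}$ into the space of probability measures on $\R^{\N^2}$ with the weak topology. To invert it, I would argue by subsequences: assuming $M^{X^n}\to M^X$ in distribution, show that $(X^n)$ is tight in $\K_{\GP}$ (via Gromov's compactness criterion applied to the marginals), and observe that any GP-subsequential limit $X'$ satisfies $M^{X'}\stackrel{d}{=}M^X$, hence $X'=X$ by reconstruction.

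The main obstacle is the tightness step in the reverse direction: one has to upgrade convergence of finite-dimensional marginals of the distance matrix to GP-precompactness of the sequence of full spaces. This requires extracting, from the fact that finite samples from $\mu^{X^n}$ have tight mutual-distance laws, a uniform control on the $\e$-mass of $\e$-balls in each $X^n$, which is exactly the content of Gromov's criterion. Once this is in place, the reconstruction theorem closes the argument cleanly, and the i.i.d.\ sampling plus separability of the ambient spaces ensure there are no measurability pathologies.
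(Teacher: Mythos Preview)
The paper does not give its own proof of this lemma: it is stated as a known result and attributed to \cite{EquivGP} (L\"ohr). So there is no in-paper argument to compare against.

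Your outline is the standard route to this equivalence (essentially what one finds in L\"ohr or in Greven--Pfaffelhuber--Winter). The forward direction via Strassen and the triangle inequality is correct and complete as written. For the reverse direction you have correctly identified both the reconstruction theorem and the tightness step as the two ingredients, and you are right that tightness is where the real work lies. What you have not done is actually carry out that step: saying ``apply Gromov's compactness criterion to the marginals'' hides the issue, because the criterion for GP-precompactness is that for every $\e>0$ there exist $\delta>0$ and $N$ such that uniformly in $n$ one can cover $1-\e$ of the mass of $\mu^{X^n}$ by $N$ balls of radius $\delta$, and extracting this uniform covering from mere convergence in law of finite distance matrices requires an argument (roughly: tightness of the one-dimensional marginal $d^{X^n}(x_1^n,x_2^n)$ controls how much mass escapes to infinity, and tightness of the $k$-point marginals for growing $k$ gives the covering). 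This is exactly the content of the cited references, so your plan is on the right track but is a sketch rather than a proof.
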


 For convenience issue we use the following extension of Lemma \ref{equivGP}. 
 \begin{lemma} \label{equivGP2} Let $(X^n)_{n\in \N}\in \K_{\GP}^\N$ and let $X\in \K_{\GP}$. Let $(y^X_i)_{i\in \N}$ be a sequence of random variables on $X$ and let $N^X:= (d^X(y_i^X,y_j^X))_{(i,j)\in \N^2}$. If
\[ M^{X_n} \limit^{(d)} N^X \quad \text{and} \quad \frac{1}{n}\sum_{i=1}^n \delta_{y^X_i} \limit^{(d)} \mu^X, \]
then $X^n \limit^{\GP} X$ and thus $M^X$ and $N^X$ have the same distribution.
\end{lemma}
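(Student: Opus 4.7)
The plan is to first establish $N^X \stackrel{(d)}{=} M^X$; Lemma \ref{equivGP} applied to $M^{X^n}\limit^{(d)} N^X \stackrel{(d)}{=} M^X$ then immediately gives $X^n \limit^{\GP} X$. So the real content is the equality of distributions.

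Two observations drive the argument. First, each $M^{X^n}$ is an exchangeable random element of $\R^{\N^2}$ (being the distance matrix of an i.i.d.\ sequence); since each finite index permutation acts continuously on $\R^{\N^2}$, exchangeability passes to the weak limit $N^X$. Second, the hypothesis $\nu^n:=\frac{1}{n}\sum_{i=1}^n \delta_{y_i^X}\limit^{(d)} \mu^X$ upgrades, via weak-continuity of the tensor product of random measures, to $(\nu^n)^{\otimes k}\limit^{(d)}(\mu^X)^{\otimes k}$ for every $k\in\N$; since $\mu^X$ is deterministic this is in fact convergence in probability. Now fix $k$ and a bounded continuous $F\colon \R^{k^2}\to\R$, and set $G(x_1,\dots,x_k):=F((d^X(x_\alpha,x_\beta))_{1\leq\alpha,\beta\leq k})$, a bounded continuous function on $X^k$. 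Continuous mapping and bounded convergence then yield
\[ \E\left[\frac{1}{n^k}\sum_{i_1,\dots,i_k=1}^n G(y_{i_1}^X,\dots,y_{i_k}^X)\right] \limit \int_{X^k} G\,d(\mu^X)^{\otimes k} = \E[F(M_k^X)], \]
where $M_k^X$ denotes the upper-left $k\times k$ block of $M^X$. On the other hand, exchangeability of $N^X$ makes the expectation of each term with distinct indices $i_1,\dots,i_k$ equal to $\E[F(N_k^X)]$, and the $O(n^{k-1})$ non-distinct tuples contribute $O(1/n)\|F\|_\infty$. Hence the same quantity also converges to $\E[F(N_k^X)]$, giving $\E[F(N_k^X)]=\E[F(M_k^X)]$ for every bounded continuous $F$. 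Consistency in $k$ yields $N^X \stackrel{(d)}{=} M^X$.

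The main subtlety is really justifying the weak-continuity of the tensor-product and integration functionals in the space of probability measures, but both are classical facts once $F$ is bounded continuous. Everything else — transferring exchangeability under weak convergence and bounding the diagonal contributions — is routine bookkeeping.
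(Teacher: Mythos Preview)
Your proof is correct and rests on the same two ingredients as the paper's: exchangeability of the distance matrix and convergence of the empirical measure $\frac{1}{n}\sum \delta_{y_i^X}$ to $\mu^X$. The paper organizes the argument slightly differently: it fixes $k\leq m$, samples a uniform $k$-tuple $(A_1,\dots,A_k)$ of distinct indices from $\{1,\dots,m\}$, uses exchangeability of the i.i.d.\ samples $x_i^{X^n}$ to write $M^{X^n}_k =^{(d)} (d^{X^n}(x_{A_i}^{X^n},x_{A_j}^{X^n}))$, passes to the limit in $n$ to land on $(d^X(y_{A_i}^X,y_{A_j}^X))$, and then lets $m\to\infty$ so that the uniformly sampled $y_{A_i}^X$ become i.i.d.\ $\mu^X$-samples. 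This yields $M^{X^n}_k\to^{(d)} M^X_k$ directly, after which Lemma~\ref{equivGP} applies; the equality $N^X=^{(d)}M^X$ is then a by-product. You instead first establish $N^X=^{(d)}M^X$ by averaging over all $n^k$ tuples (with replacement) and computing the limit two ways, then invoke Lemma~\ref{equivGP}. Your route makes the role of the tensor-product continuity more explicit, while the paper's without-replacement sampling is a shade more direct; but the underlying mechanics are the same.
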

\begin{proof} Fix $k \leq m\in \N$. Let $(A_1,\dots, A_k)$ be a uniform tuple of $k$ different integers in $\{1,\dots, m\}$. Since $M^{X_n} \to^{(d)} N^X$, we have 
\[ \left (d^{X^n}\left (x_i^{X^n},x_j^{X^n} \right ) \right )_{1\leq i, j \leq k} =^{(d)} \left (d^{X^n}\left (x_{A_i}^{X^n},x_{A_j}^{X^n} \right ) \right )_{1\leq i, j \leq k}
\limit^{(d)} \left (d^{X}\left (y_{A_i}^{X},y_{A_j}^{X} \right ) \right )_{1\leq i, j \leq k}.  \]
Now since as $m\to \infty$, $\frac{1}{m}\sum_{i=1}^m \delta_{y^X_i} \to \mu^X$, taking $m\to +\infty$ in the above equation yields
\[\left (d^{X^n} \left(x_i^{X^n},x_j^{X^n} \right ) \right )_{1\leq i, j \leq k} \limit^{(d)} \left (d^{X}\left (x_{i}^{X},x_{j}^{X} \right ) \right )_{1\leq i, j \leq k}.\]
Finally, since $k$ is arbitrary, Lemma \ref{equivGP} concludes the proof.
\end{proof}
\subsection{Gromov--Hausdorff (GH) topology} \label{GH}
Let $\K_{\GH}$ be the set of isometry-equivalent classes of compact metric space. For every metric space $(X,d)$, we write $[X,d]$ for the isometry-equivalent class of $(X,d)$, and frequently use the notation $X$ for either $(X,d)$ or $[X,d]$. 

 For every metric space $(X,d)$, the Hausdorff distance between $A,B\subset X$ is given by
\[d_H(A,B):= \inf\{\e>0, A\subset B^\e, B\subset A^\e \}. \]
The Gromov--Hausdorff distance between $(X,d)$,$(X',d')\in \K_{\GH}$ is given by 
\[ d_{\GH}((X,d),(X',d')):=\inf_{S,\phi,\phi'} \left (d_H(\phi(X), \phi'(X')) \right ),\]
where the infimum is taken over all metric spaces $S$ and isometric embeddings $\phi :X\to S$, $\phi' :X'\to S$. $d_{\GH}$ is indeed a distance on $\K_{\GH}$ and $(\K_{\GH},d_{\GH})$ is a Polish space. (see e.g.  \cite{GHP})

\subsection{Gromov--Hausdorff--Prokhorov (GHP) topology} \label{GHPdef}
Let $\K_{\GHP}\subset\K_{\GP}$ be the set of isometry-equivalent classes of compact measured metric space.
 The Gromov--Hausdorff--Prokhorov distance between $(X,d,\mu)$,$(X',d',\mu')\in \K_{\GHP}$ is given by 
\[ d_{\GHP}((X,d,\mu),(X',d',\mu')):=\inf_{S,\phi,\phi'} \left ( d_P(\phi_\star \mu, \phi'_\star\mu')+d_H(\phi(X), \phi'(X')) \right ),\]
where the infimum is taken over all metric spaces $S$ and isometric embeddings $\phi :X\to S$, $\phi' :X'\to S$. $d_{\GHP}$ is indeed a distance on $\K_{\GHP}$ and $(\K_{\GHP},d_{\GHP})$ is a Polish space. (see  \cite{GHP})

Note that $\GHP$ convergence implies $\GP$ convergence, then that random variables $\GHP$ measurable are also $\GH$ measurable. For every $[X,d,p]\in \K_{\GHP}$, let $[X,d]$ denote its natural projection on $\K_{\GH}$. Note that GHP convergence implies GH convergence of the projections on $\K_{\GH}$, then that the projection on $\K_{\GH}$ is a measurable function. We will need the following statement.

\begin{lemma} \label{GP+GH=GHP} Let $([X^n,d^n,p^n])_{n\in \N}$ and $[X,d,p]$ be GHP measurable random variables in $\K_{\GHP}$. Assume that almost surely $[X,d,p]$ have full support. Assume that $([X^n,d^n,p^n])_{n\in \N}$ converges weakly toward $[X,d,p]$ in a GP sens, and that $([X^n,d^n])_{n\in \N}$ converges weakly toward $[X,d]$ in a GH sens. Then $([X,d,p])_{n\in \N}$ converges weakly toward $[X,d,p]$ in a $\GHP$ sens.
\end{lemma}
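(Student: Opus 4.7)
The plan is a standard tightness-plus-identification argument. I first show that $([X^n,d^n,p^n])_n$ is tight in $\K_{\GHP}$, then prove that any GHP weak subsequential limit has the law of $[X,d,p]$; weak GHP convergence will follow. For tightness, the assumed GH convergence makes $([X^n,d^n])_n$ tight in $\K_{\GH}$, and I invoke the fact that the preimage under the canonical projection $\K_{\GHP}\to\K_{\GH}$ of any GH-compact set is GHP-compact. Indeed, by Gromov's precompactness, members of a GH-compact family admit a common isometric embedding into a single compact space $K$, on which the probability measures form a weakly compact set by Prokhorov; so tightness lifts from $\K_{\GH}$ to $\K_{\GHP}$.

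Next, let $[\tilde X,\tilde d,\tilde p]$ be any GHP weak subsequential limit. The continuous projections $\K_{\GHP}\to\K_{\GP}$ and $\K_{\GHP}\to\K_{\GH}$, combined with uniqueness of weak limits, give
\[ [\tilde X,\tilde d,\tilde p]_{\GP}\stackrel{d}{=}[X,d,p]_{\GP} \qquad\text{and}\qquad [\tilde X,\tilde d]\stackrel{d}{=}[X,d]. \]
The crucial remark is that for measured metric spaces with \emph{full support} the GP class determines the GHP class: a GP-equivalence is a measure-preserving isometry between the supports, which are the whole spaces. Applied to the full-support limit $[X,d,p]$, the first identity upgrades to $[\supp(\tilde p),\tilde d,\tilde p]_{\GHP}\stackrel{d}{=}[X,d,p]_{\GHP}$; in particular $[\supp(\tilde p),\tilde d]\stackrel{d}{=}[X,d]\stackrel{d}{=}[\tilde X,\tilde d]$ in $\K_{\GH}$.

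The main obstacle is to deduce $\supp(\tilde p)=\tilde X$ almost surely. Pathwise, $\supp(\tilde p)\subset\tilde X$ is closed, and the two spaces share the same GH distribution. I plan to exploit the classical rigidity of compact metric spaces --- an isometric self-embedding of a compact space is automatically surjective --- which makes the ``closed-subset'' relation an antisymmetric partial order on $\K_{\GH}$. Combined with the pathwise inclusion and equality in distribution, this calls for a monotone-functional argument: for each bounded GH-continuous functional $f$ monotone under closed inclusion (e.g.\ truncated diameter, truncated packing or covering numbers), pathwise $f([\supp(\tilde p)])\le f([\tilde X])$ together with equality in distribution forces a.s.\ equality, and a separating family of such functionals then yields $[\supp(\tilde p)]=[\tilde X]$ a.s., whence $\supp(\tilde p)=\tilde X$ a.s.\ by rigidity. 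Once this step is established, $[\tilde X,\tilde d,\tilde p]\stackrel{d}{=}[X,d,p]$ in $\K_{\GHP}$, and the tightness step yields weak GHP convergence of the full sequence.
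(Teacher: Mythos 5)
Your proposal is correct and follows essentially the same route as the paper: establish GHP tightness from GH tightness, pass to a GHP subsequential limit, use the GP convergence together with full support of the limit to identify the support of the subsequential limit's measure with the GH class of $[X,d]$, and then use the GH convergence plus a monotone-functional comparison (the paper uses covering numbers $\mathcal{N}_\e$ explicitly) to upgrade "closed subset equal in distribution" to "equal almost surely". The only cosmetic differences are that the paper verifies tightness by checking the diameter and covering-number criterion of \cite{GHP} Theorem 2.4 directly rather than via the projection-preimage compactness fact, and it skips your "rigidity" detour by arguing directly that a proper closed subset of a compact space has strictly smaller $\e$-covering number for $\e$ small.
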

\begin{proof}  Beforehand, let us introduce the covering numbers. For every metric space $(X,d)$, and $\e>0$ let $\mathcal{N}_{\e}(X,d)$ be the minimal number of closed balls of radius $\e$ to cover $X$. Note that if $(X,d)$ and $(X',d')$ are isometric spaces then for every $\e>0$, $\mathcal{N}_{\e}(X,d)=\mathcal{N}_{\e}(X',d')$, so for every $\e>0$, $\mathcal{N}_{\e}$ is well defined on $\K_{\GH}$. And, for every $\e>0$, $\mathcal{N}_{\e}$ is a measurable function on $\K_{\GH}$.
 
 It directly follows from the GH convergence that:
\begin{compactitem}
\item[(i)] The diameter of $[X^n,d^n]$ converges weakly as $n\to \infty$ toward the diameter of $[X,d]$.
\item[(ii)] For every $\e>0$, $(\mathcal{N}_{\e}[X^n,d^n])_{n\in \N}$ is tight (see Burago Burago Ivanov \cite{glue} Section 7.4).
\end{compactitem}
Hence, by \cite{GHP} Theorem 2.4, $([X^n,d^n,p^n])_{n\in \N}$ is tight for the GHP topology.

Now, let $[X',d',p']$ be a GHP subsequential limit of $([X^n,d^n,p^n])_{n\in \N}$. It is enough to show that necessarily $[X,d,p]=^{(d,\GHP)}[X',d',p']$. 

On the one hand, since $[X^n,d^n]\to^{\WGH}[X,d,p]$ and  $[X^n,d^n,p^n]\to^{\WGHP}[X,d,p]$ along a suitable sequence, we have $[X,d]=^{(d,\GH)}[X',d']$. Hence, we have
\begin{equation}  \forall \e>0, \quad \mathcal{N}_{\e}([X,d])=^{(d)}\mathcal{N}_{\e}([X',d']). \label{2209a} \end{equation}

On the other hand, since $[X^n,d^n,p^n]\to^{\WGP}[X,d,p]$ and  $[X^n,d^n,p^n]\to^{\WGHP}[X,d,p]$ along a suitable sequence, we have $[X,d,p]=^{(d,\GP)}[X',d',p']$. So, we may assume that a.s. $d_{\GP}([X,d,p],[X',d',p'])=0$. So, by definition of the GP topology, a.s. there  exists a metric space $S$ and isometric embeddings $\phi:X\to S$, $\phi':X'\to S'$ such that $d_P(\phi_\star p, \phi'_\star p')=0$, and thus $\phi_\star p=\phi'_\star p'$. Hence, if $\supp$ denotes the support of a measure, a.s. $\phi(\supp(p))=\phi'(\supp(p'))$. Therefore, since a.s. $\supp(p)=X$, we have a.s. 
\begin{equation} \forall \e>0, \quad \mathcal{N}_\e(X)=\mathcal{N}_\e(\supp(p)) =\mathcal{N}_\e(\supp(p')) \leq \mathcal{N}_\e(X'). \label{2209b} \end{equation}

Finally, note that \eqref{2209a} and \eqref{2209b} implies together that a.s. $\supp(p')=X'$. Thus, since a.s. $X=\supp(p)$ and $\phi(p)=\phi(p')$, we have a.s. $\phi(X)=\phi'(X')$. Thus, since a.s. $\phi(p)=\phi(p')$ and $\phi(X)=\phi'(X')$, we have a.s. $d_{\GHP}(X,X')=0$. 
\end{proof}

\section{Applications in some stable cases} \label{sec:stable}
In this section we briefly illustrate our main results with the $\alpha>0$ stable case: we write $\asymp$ for "up to a constant". We consider for $1\leq i\leq n$, $d_i^n\asymp \lfloor K_n/i^{1/\alpha} \rfloor$, where $K_n$ is a renormalisation factor to have $\sum d_i=n-1$. We also consider the uniform measure on the vertices. Note that $\n_0^n\to \infty$. We can distinguish several cases:

$\bullet$ \textbf{if $\alpha\geq 2$:} by standard analysis calculus we have $K_n\asymp n^{1/\alpha}$, and $\sigma^n\asymp \sqrt{n}$, for $\alpha>2$ or $\sigma^n\asymp  \sqrt{n\log(n)}$ for $\alpha=2$ so $d_1^n/n\to 0$, $\sigma_\D\gg d_1^n$, hence $\Dn \ply (1,0,0,\dots)$. In this case, Theorem \ref{D_GP_T} tells us that the $\Dn$-tree converges for the GP topology toward a ICRT of parameter $(1,0,\dots)$ which is the Brownian tree. Also since the degrees are all small, we may use the approximation $1-e^{-d^n_il/\sigma^n}\asymp d^n_i l\sigma^n$ giving $\psi^n(l)\asymp l^2$ for $l\geq 1$. Thus, Assumption \ref{D_Tight_GHP} holds, and the convergence also holds for the GHP topology. And, Theorem \ref{thm:D_Height} gives a sub-Gaussian bound for the height. The case $\alpha>2$ was already covered by Broutin and Marckert  \cite{Broutin}. 

$\bullet$ \textbf{if $1<\alpha<2$:} Again with elementary calculus, $K_n\asymp n^{1/\alpha}$, $\sigma^n\asymp n^{1/\alpha}$, and $d_1^n/n\to 0$. Now, note that $d_1/\sigma^n\nrightarrow 0$. Up to extraction, we may assume that for every $i\in \N$, $d_i/\sigma^n$ converges. So $\Dn\ply\Theta$ for some $\Theta$. And we have for $i\geq 1$, $\theta_i\asymp i^{-1/\alpha}$, so $\sum \theta_i=\infty$. Also, with elementary computations we get that as $n\to \infty$ and $k\to \infty$ slowly, $\sum_{i=1}^k d_i^n(d_i^n-1)/\sigma^n\to 1$. So by Fatou's lemma $\sum_{i=1}^\infty \theta_i^2=1$, hence $\theta_0=0$. 
Thus, Theorem \ref{D_GP_T} tells us that the $\Dn$-tree converges for the GP topology toward a $\Theta$-ICRT. And the distances are of typical order $n/\sigma^n\asymp n^{1-1/\alpha}$. Then to estimate $\psi^n(\D)$, we may use the approximation $1-e^{-d^n_il/\sigma^n}\asymp \min(1,d^n_il/\sigma^n)$, giving with integration calculus, for $l\geq 1$,
\begin{equation} \psi^n(l)\asymp l \sum_{i=1}^{n} i^{-1/\alpha}\min(1,l i^{-1/\alpha})\asymp l \sum_{i=1}^{l^{\alpha}} i^{-1/\alpha}+l^2 \sum_{i=l^{\alpha}}^{n} i^{-2/\alpha} \asymp l^{\alpha}. \label{eq:stablepsy}\end{equation}
Thus, Assumption \ref{D_Tight_GHP} holds, and the convergence holds for the GHP topology. Also, Theorem \ref{thm:D_Height} gives a bound of the form 
\[ \proba(H(T^n)>cn^{1-1/\alpha}x)\leq Ce^{-cx^{\alpha}}, \]
 which matches with Kortchemski's \cite{TailsS} bounds.

Still in this case, we may apply the results of  \cite{ICRT1} to understand the geometry of the ICRT. First,
with elementary computations we get that as $n\to \infty$ and $k\to \infty$ slowly, $\sum_{i=1}^k d_i^n(d_i^n-1)/\sigma^n\to 1$. So by Fatou's lemma $\sum_{i=1}^\infty \theta_i^2=1$, hence $\theta_0=0$.  Next we may adapt \eqref{eq:stablepsy} to get
by \eqref{eq:stablepsy} and Lemma \ref{prelimGH} (a), we also have, $ \psi^\Theta(l)=l\E[\mu^\Theta[0,l]]\asymp l^{\alpha}$.
So by \cite[Theorem 3.3]{ICRT1} the ICRT is a.s. compact. And by \cite[Theorem 3.4]{ICRT1} it has a.s. fractal dimensions $1+1/(\alpha-1)$, which again matches what is know for $\alpha$-stable trees (see \cite{Duquesne1,Duquesne2}). 

$\bullet$ \textbf{if $\alpha=1$:} This case may be treated as the case $1<\alpha<2$ but is very different: This time $K_n\asymp n/\log(n)$ and $\sigma^n\asymp n/\log(n)$. We still have convergence for the GP topology. But, when we adapt \eqref{eq:stablepsy} we get $\psi(l)\asymp l\log(l)$. And, since $\int_1^\infty 1/(l\log(l))=\infty$, Assumption \ref{D_Tight_GHP} no longer holds. Moreover, we get $\psi^\Theta(l)\asymp l\log(l)$ so by \cite[Theorem 3.3]{ICRT1}, the ICRT is a.s. not compact and the GHP convergence can never hold. Finally, Theorem \ref{thm:D_Height} upper-bound the height by 
\[ Cn/\sigma^n \int_1^{\sigma^n} \frac{dl}{\psi^\n(l)} =O(\log(n)\log\log(n)), \]
 which we believe to be sharp up to a multiplicative constant.

$\bullet$ \textbf{if $\alpha<1$:} This time $K_n\asymp n$ and $\sigma^n\sim n$ so $d_1^n/n\nrightarrow 0$. Thus, Assumption \ref{Hypo2} does not hold, and we do not have GP convergence. Instead, by Proposition \ref{P_GP_T}, we have a discrete limit toward a $\P$-tree. We may still use Theorem \ref{thm:D_Height}: We first estimate $\psi^\D(l)\asymp l$, then we get a bound of order $\log(n)$ for the height, which we think is sharp up to a multiplicative constant.

\paragraph{Acknowledgment}
I am thankful to the many people who gave feedback on a previous version of this paper. I am notably grateful to Nicolas Broutin for the supervision of my PhD, and to Svante Janson for a throughout review of a previous version.


\end{document}